\newcommand\eL{{\mathbb{L}^{0}}}
\newcommand\eD{{\mathbb{D}}}
\newcommand\eE{{\mathbb{E}}}
\newcommand\cF{{\cal F}}
\newcommand\cL{{\cal L}}
\newcommand\cB{{\cal B}}
\newcommand\cH{{\cal H}}
\newcommand\cM{{\cal M}}
\newcommand\R{{\bf R}}
\newcommand\Q{{\bf Q}}
\newcommand\E{\mathbb{E}}
\newcommand\N{\mathbb{N}}
\newcommand\e{{\varepsilon}}
\newcommand\red{\color{red}}
\newtheorem{theo}{Theorem}[section]
\newtheorem{prop}[theo]{Proposition}
\newtheorem{lemm}[theo]{Lemma}
\newtheorem{defi}[theo]{Definition}
\newtheorem{ex}[theo]{Example}
\newtheorem{coro}[theo]{Corollary}
\newtheorem{rem}[theo]{Remark}
\newcommand\beq{\begin{equation}}
\newcommand\eeq{\end{equation}}
\newcommand\beqa{\begin{equation*}}
\newcommand\eeqa{\end{equation*}}
\newcommand\bea{\begin{eqnarray}}
\newcommand\eea{\end{eqnarray}}
\newcommand\bean{\begin{eqnarray*}}
\newcommand\eean{\end{eqnarray*}}
\newcommand\re{\color{red}}
\DeclareMathOperator{\essinf}{ess\;inf}
\DeclareMathOperator{\esssup}{ess\;sup}
\DeclareMathOperator{\csupp}{{\rm c-supp}}
\begin{document}

\begin{frontmatter}

\title{Conditional indicators }\bigskip

\author[A1]{ Dorsaf CHERIF}
\author[A2]{ Emmanuel LEPINETTE}

\address[A1]{ Faculty of Sciences of Tunis, Tunisia.\\
Email: dorsaf-cherif@hotmail.fr
}

\address[A2]{ Ceremade, UMR  CNRS 7534,  Paris Dauphine University, PSL,\\ Place du Mar\'echal De Lattre De Tassigny, 
75775 Paris cedex 16, France.\\
Email: emmanuel.lepinette@ceremade.dauphine.fr
}

\begin{abstract} In this paper, we introduce a large class of (so-called) conditional indicators, on a complete probability space
with respect to a sub $\sigma$-algebra. A conditional indicator is a positive mapping, which is not necessary linear, but may share common features with the conditional expectation, such as the tower property or the projection property. Several characterizations are formulated.  Beyond the definitions, we provide some non trivial examples that are used in finance and may inspire new developments in the theory of operators on Riesz spaces.   
 \end{abstract}

\begin{keyword} Positive operator; conditional expectation; tower-property; projection; conditional risk-measure; stochastic basis; mathematical finance\end{keyword}

\end{frontmatter}
\section{Introduction}

In mathematical finance, the positive expectation operator and, more generally, the conditional expectation operator, is certainly the indicator the most used by the practitioners.  It provides the best estimation $E(X)$, say today, of  any future wealth or price $X$, modeled as a random variable, that is only revealed at some horizon date. Actually, there exists a large variety of indicators that are used in statistics, economics but, also, in  finance, in order to control for example the risk of financial strategies. 

The conditional expectation is the key tool when estimating the  portfolio process replicating a contingent claim in a complete financial market model under the usual no-arbitrage condition, see \cite{FollS}, \cite{DelSch05}. Under this condition, the price process is a martingale under the so-called risk-neutral probability measure, which is fundamental to identify the unique replicating portfolio process from its terminal value, see  \cite{Schal99}. This notion of martingale is generally defined with respect to the expectation operator, which is made possible by the well-known  tower and projection properties. But we may find generalizations to other operators, such as in \cite{BCJ}. Actually, the expectation operator appears naturally in the classical theory because of the financial models themselves that are conic by definition. In particular, the no-arbitrage condition which is imposed appears to be equivalent to the existence of a risk-neutral probability measure, by virtue of the Hahn-Banach separation theorem with respect to the $\sigma(L^\infty,L^1)$ weak  topology, see for example \cite{DMW} and \cite{HK79} in discrete time. 

Nevertheless, as soon as we consider more realistic financial markets with transaction costs, the models are not necessary conic and, worst, they are not convex if there are fixed costs, see \cite{LT} . In that case, the usual arguments derived from the standard duality of the convex analysis, see \cite{PenMOR}, can not be used. In the recent papers \cite{CL} and \cite{EL1}, a new approach is proposed. Not only there is no need to impose a no-arbitrage condition which is, in general, difficult to verify in practice but it is possible to compute numerically the super-hedging  prices  backwards thanks to new results on random optimization, see \cite{EL}. To do so, the fundamental operator we use (called indicator in this paper as it is not linear)  is the conditional essential supremum, both with its dual indicator, i.e. the conditional essential infimum, see \cite{LT}, \cite{LV}, \cite{LZ}. Similarly to the conditional expectation operator, it satisfies the tower property and other common features. In particular,  it is possible to consider martingales w.r.t. such an indicator.

 In our paper, we  define  conditional  ``indicators $I$ " with respect to a sub $\sigma$-algebra $\cH$ as mappings that  map real-valued  random variables $X$ into the subset of $\cH$-measurable random variables. Precisely, $I(X)$ is supposed to belong to the convex hull of the conditional support  of  all possible values of $X$, and satisfies $I(X)=X$ if $X$ is $\cH$-measurable.  This implies that $I$ is positive i.e. $I(X)\ge 0$ if $X\ge 0$.  In finance, a $\sigma$-algebra $\cH$ is generally interpreted as   available information about the market. Then, a conditional indicator is an indicator whose value is updated thanks to the information $\cH$. Classical indicators in finance, but also in statistics, are the quantiles, e.g. the Value At Risk \cite{Lee}, in the domain of risk measures for banking and insurance regulation, see \cite{Delb1} and \cite{DS}.

 In Section \ref{SecDef}, we introduce the main definitions and we give some typical examples of conditional indicators. In Section \ref{SecReg}, we consider and characterize the conditional  ``indicators $I$ " that are said regular, i.e. they  satisfy the property $I(X1_{\cH})=I(X)1_{\cH}$ for all $H\in \cH$. This property is observable in many examples of conditional indicators and used to define the projection property related to the tower property of Section \ref{SecTP}. In Section \ref{SecDI}, the dual of a conditional indicator  is  introduced and  an example  in finance is given.  Natural questions  arise, such as identifying the set of all self-dual conditional indicators. In Section \ref{SecRM}, we make a link between conditional indicators and the conditional risk measures of financial regulation, see also \cite{FP}.   Section \ref{CEI} is  devoted to the conditional expectation defined on the whole space $L^0(\R,\cF)$. We also provide some minimal conditions under which  a conditional indicator is necessarily a  conditional expectation under some absolutely continuous probability measure.\smallskip
 
 In a future work, we hope to take the theory of conditional indicators to the Riesz space setting, thereby generalizing the ideas of \cite{ABCM}, \cite{AL}, \cite{G} and \cite{KLW}.  Some interesting problems are open such as characterizing the linear conditional indicators, studying the indicators satisfying the tower property and the associated notion of martingales but, also, identifying the stochastic indicators which are uniquely defined by the projection property, see a first result in that direction given by Proposition \ref{uniq1}. Actually, the notion of conditional expectation in the field of Riesz spaces and positive operators is very popular and has given  rise to new developments recently, see \cite{AL} for an overview on positive operators and the papers \cite{KLW} and \cite{G} on conditional expectation, among others. Naturally, the concept of martingale has been introduced, see \cite{KLW} and \cite{G}. As mentioned above, some non linear positive operators are also needed in finance and we think that they may inspire interesting problems for the community of people working on positive operators and Riesz spaces, see for example \cite{ABCM}.

\section{Conditional indicators}\label{SecDef}

We consider a probability space $(\Omega,\mathcal{F}, \mathbb{P})$ where the $\sigma$-algebra $\mathcal{F}$ is supposed to be complete with respect to $\mathbb{P}$.  Let $\mathcal{H}$ be a sub $\sigma$-algebra of $\mathcal{F}$ which is also supposed to be complete. In the whole paper, we use the following notations.\smallskip
 
\noindent {\bf Notations and conventions:}\smallskip
 
1) For any $r\in \R$, we adopt the conventions that $ r \pm \infty = \pm \infty,$ $\infty - \infty =0$, and  $\infty + \infty = \infty$ and $0\times \pm \infty=0$. 

 By virtue of our notational conventions, we deduce that $\alpha(a-b)=\alpha a - \alpha b$ for all $\alpha \in \R$ and $a,b\in \overline{\R}$.
 \smallskip
 
 2)   For any subset $G$ of $\R$, we denote by $\eL(G, \cF)$ (resp. $\eL(G, \cH)$) the set of all $\cF$-measurable (resp. $\cH$-measurable) random variables $X$ such that $X(\omega)\in G$ a.s..\smallskip
 
 3) We define the extended real line $\overline{\R}=\R\cup\{-\infty,\infty\}$. \smallskip
 
 We recall the concept of conditional supremum and infimum, see \cite{KS}[Section 5.3.1], \cite{KL}:
 
 \begin{theo} Let $\Gamma$ be a family of $\cF$-measurable random variables with values in $\overline{\R}$ and let $\cH$ be a sub $\sigma$-algebra of $\cF$. There exists a unique $\cH$-measurable random variable $\esssup_{\cH} \Gamma$ such that:
 \begin{itemize}
 \item [1)] $\esssup_{\cH} \Gamma \ge \gamma$, for all   $\gamma\in \Gamma$,
 
 \item [2)] If $\hat \gamma$ is $\cH$-measurable and $\hat \gamma \ge \gamma$, for all   $\gamma\in \Gamma$, then $\hat \gamma \ge \esssup_{\cH} \Gamma$.
  
 \end{itemize}
 
  \end{theo}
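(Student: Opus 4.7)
My plan is to prove uniqueness and existence separately, deducing uniqueness directly from property~(2) and constructing the witness for existence as the classical essential infimum of the set of $\cH$-measurable upper bounds of $\Gamma$. Uniqueness is immediate: if $Y_{1},Y_{2}$ both satisfy (1) and (2), then by (1) each $Y_{i}$ is an $\cH$-measurable majorant of $\Gamma$, so property~(2) applied twice yields $Y_{1}\ge Y_{2}$ and $Y_{2}\ge Y_{1}$, hence $Y_{1}=Y_{2}$ a.s.

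For existence, the natural candidate is obtained by introducing
\[
\cA=\{\,Y\in\eL(\overline{\R},\cH)\; :\; Y\ge\gamma \text{ a.s. for every }\gamma\in\Gamma\,\}.
\]
This set is non-empty (the constant $+\infty$ belongs to it) and stable under finite minima, since the minimum of two $\cH$-measurable random variables is $\cH$-measurable and still dominates every $\gamma\in\Gamma$ whenever both factors do. I would then set $Y^{\star}=\essinf\cA$ in the classical (non-conditional) sense.

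The core analytic step is the standard result that whenever a family of random variables is stable under pairwise minima, its essential infimum is attained as the almost sure limit of a decreasing sequence extracted from the family. To keep this rigorous in the extended-real setting, I would transport the problem to the bounded interval $[-\pi/2,\pi/2]$ via the order-preserving bijection $\varphi=\arctan$, apply the classical existence result there, and pull back through $\varphi^{-1}$. This produces a sequence $(Y_{n})\subset\cA$ with $Y_{n}\downarrow Y^{\star}$ a.s. Property~(1) follows by passing to the limit in $Y_{n}\ge\gamma$ for each fixed $\gamma\in\Gamma$; property~(2) is immediate from the construction, as any $\cH$-measurable $\hat\gamma$ dominating all of $\Gamma$ belongs to $\cA$ and therefore satisfies $\hat\gamma\ge Y^{\star}$ by minimality of the essential infimum.

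The main obstacle is technical rather than conceptual: one must carefully handle the values $\pm\infty$ and extract the decreasing countable sub-family realising the essential infimum. The $\arctan$ reduction sidesteps the $\infty-\infty$ pathologies allowed by the notational conventions of the paper and yields the required sequence directly from the classical argument (e.g.\ the one used for the essential supremum in Karatzas--Shreve). Aside from this caveat, the proof is a routine adaptation of the standard construction.
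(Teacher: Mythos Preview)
Your argument is correct: the uniqueness part is immediate from the minimality property, and your existence construction via the classical essential infimum of the downward-directed family of $\cH$-measurable majorants, combined with the $\arctan$ reduction to handle $\pm\infty$, is precisely the standard route. Note, however, that the paper does not supply its own proof of this theorem: it is stated as a recalled result with references to \cite{KS} and \cite{KL}, so there is no in-paper proof to compare against; your sketch is essentially the argument one finds in those references.
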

 
 Note that $\esssup_{\cH} \Gamma$ is smallest $\cH$-measurable variable that dominates the family $\Gamma$. Symmetrically, we define $\essinf_{\cH} \Gamma:=-\esssup_{\cH}(- \Gamma)$ as the largest $\cH$-measurable variable that is dominated by the family $\Gamma$.

\begin{defi}\label{DefiCI}

Let $\eD_{I}$ be a subset of $\eL(\overline{\R}, \cF)$ containing $0$.
We say that a mapping 
\bean
I_{\cH}: \eD_{I} & \longrightarrow & \eL(\overline{\R}, \cH).\\
 X & \longmapsto & I_{\cH}(X) 
\eean
is a Conditional Indicator  (C.I.) if the following properties hold:
\begin{itemize}
\item[(P1)]  $I_{\mathcal{H}}(X) \in \csupp_{\mathcal{H}}(X):=[\essinf_{\mathcal{H}}(X),\esssup_{\mathcal{H}}(X)]$ a.s. \smallskip
\item[(P2)]  $\mathbb{D}_{I}+\eL(\overline{\R},\cH) \subseteq \mathbb{D}_{I}$.
\end{itemize}

\end{defi}

\begin{rem}
For the sake of simplicity, we write  $I$ instead of $I_{\cH}$ when $\cH$ is fixed without any possible confusion. Note that, for all $X \in \eL(\overline{\R}, \cH)$, $I(X)=X$, i.e. $I$ is  idempotent. We also observe that it is always possible to extend a conditional indicator to the whole set $\eL(\R,\cF)$. Indeed, it suffices to define for example $I(X)=\esssup_{\mathcal{H}}(X)$ for $X\in \eL(\overline{\R}, \cF)\setminus \mathbb{D}_{I}$. In the following, the domain of definition of any conditional indicator is always denoted by $\eD_I$.
\end{rem}

\begin{rem} The natural extension of Definition \ref{DefiCI} to  multi-varied random variables is to suppose that $I_{\mathcal{H}}(X)$ belongs a.s. to the convex hull of the conditional support $\csupp_{\mathcal{H}}(X)$, see the definition in \cite{EL}, which is no more an interval. This possible generalization is an open problem beyond the scope of this paper.
\end{rem}

\begin{lemm}\label{pos}
Let $\mathcal{H}$ be a sub-$\sigma$-algebra of $\mathcal{F}$, and let $I$ be a C.I. w.r.t. $\mathcal{H}$.
Then, $I$ is a positive indicator, i.e. $I(X) \ge 0$, for all $X \in \mathbb{D}_I$ such that $X \ge 0$. 
  	In particular, if $I$ is linear, then $I$ is increasing. 
  	\end{lemm}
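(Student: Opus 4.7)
The plan is to read off positivity directly from property (P1), using only the definition of $\essinf_{\cH}$ as the largest $\cH$-measurable minorant of the argument family.

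First I would fix $X \in \eD_I$ with $X \geq 0$ a.s. The singleton family $\{X\}$ is then dominated from below by the constant $0$, which is $\cH$-measurable. By the defining property of $\essinf_{\cH}$ (the largest $\cH$-measurable random variable dominated by the family), this forces $\essinf_{\cH}(X) \geq 0$ a.s. Property (P1) then gives
\beqa
I(X) \geq \essinf_{\cH}(X) \geq 0 \quad \text{a.s.},
\eeqa
which is the desired positivity.

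For the second assertion, suppose $I$ is linear on $\eD_I$ and let $X,Y \in \eD_I$ with $X \leq Y$ a.s., assuming $Y-X$ is in the domain (which is natural when linearity is imposed). Applying the first part to the nonnegative random variable $Y-X$ gives $I(Y-X) \geq 0$, and linearity yields $I(Y) - I(X) \geq 0$, i.e. $I(X) \leq I(Y)$. Hence $I$ is increasing.

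I do not anticipate a real obstacle: the whole argument reduces to recognising that $0$ is an admissible $\cH$-measurable candidate in the characterisation of $\essinf_{\cH}$, after which (P1) finishes the first part and linearity immediately upgrades positivity to monotonicity. The only mild subtlety is the implicit domain assumption for the linear statement, i.e. that $Y-X$ belongs to $\eD_I$ whenever $X,Y$ do; this is harmless because (P2) already ensures stability under addition of $\cH$-measurable variables, and any reasonable notion of linearity presupposes a subtraction-stable domain.
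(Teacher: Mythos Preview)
Your proof is correct and follows exactly the same route as the paper: use (P1) to bound $I(X)$ below by $\essinf_{\cH}(X)$, and observe that the latter is nonnegative whenever $X\ge 0$. The paper leaves the ``linear $\Rightarrow$ increasing'' part implicit, whereas you spell it out; your treatment of the domain issue is also appropriate, since the paper's later definition of $\cH$-linearity explicitly requires $\alpha_{\cH}\eD_I+\eD_I\subset\eD_I$.
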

	\begin{proof} Let $X \in \mathbb{D}_I^+$.
  	As  $I(X) \in \csupp_{\mathcal{H}}(X)$, then $I_{\mathcal{H}}(X) \geq \mathrm{essinf}_\mathcal{H}(X) \ge 0 
	$ and the conclusion follows.
  	\end{proof}

\begin{defi}
Let $I$ be a C.I. Then, 
\begin{itemize}
\item[1)] $I$ is said increasing if,  for all $X,Y \in \eD_I$ such that  $X \leq Y$, we have  $I(X) \leq I(Y)$.
\item[2)] $I$ is said $\mathcal{H}$-translation invariant if  $I(X+Y_{\cH})=I(X)+Y_{\cH}$ for all $X \in \eD_I$ and $Y_{\cH} \in \eL(\R, \cH)$ such that $X+Y_{\cH}\in \eD_I$. 
\item[3)] $I$ is said $\cH$-positively-homogeneous if, for  every $\alpha_{\cH}  \in \eL(\R_+, \cH)$, we have  $\alpha_{\cH} \eD_I \subset \eD_I$ and for any $X \in \eD_I$, $I(\alpha_{\cH} X)=\alpha_{\cH} I(X)$.
\item[4)] $I$ is said $\cH$-linear  if,  for all   $\alpha_{\cH}  \in \eL(\R, \cH)$, $\alpha_{\cH} \eD_I +\eD_I \subset \eD_I$, and for every $X,Y \in \eD_I$,  $ I(\alpha_{\cH}  X+Y)=\alpha_{\cH} I(X)+I(Y)$.
\item [5)]  If   $I(\lim\sup_n X_n) \geq  \lim\sup_n I(X_n)$ (resp. $I(\lim\inf_n X_n) \leq  \lim\inf_n I(X_n) $), for any sequence $(X_n)_n \in \eD_I$ such that $\lim\sup_n X_n\in \eD_I$ (resp. $\lim\inf_n X_n \in \eD_I$), we say that $I$  satisfies the upper (resp. lower) Fatou property.
\item [6)] $I$ is  said conditionally convex if, for any $\alpha_{\cH}  \in \eL([0,1], \cH)$, we have $\alpha_{\cH} \eD_I +  (1-\alpha_{\cH})  \eD_I \subset \eD_I$ and  for all $X \in \eD_I$,

$$ I(\alpha_{\cH} X + (1-\alpha_{\cH} ) Y)  \le \alpha_{\cH}  I(X)+ (1-\alpha_{\cH})  I(Y).$$
\end{itemize}
\end{defi}

\begin{rem} The conditional expectation operator $I^1(X)=\E(X|\mathcal{H})$  is a well known  example of conditional indicator  which  is  ${\mathcal{H}}$-linear, $\mathcal{H}$-translation invariant and  increasing on $\eD_{I^1}=\mathbb{L}^{1}(\R, \mathcal{F})\cup \eL( \overline{\R},\cH)$, where $\mathbb{L}^{1}(\R, \mathcal{F})$ is  the set of all integrable random variables. \smallskip

The conditional supremum  $I^2(X)=\esssup_{\mathcal{H}}(X)$  is another {\red example} defined on $\eD_{I^2}=\eL( \overline{\R},\cF)$.  Note that $I^2$ is  increasing, $\mathcal{H}$-translation invariant, ${\mathcal{H}}$-positively-homogeneous and sub-additive.\smallskip

If $I:\eD_{I}  \longrightarrow  \mathbb{L}^{0}(\overline{\R}, \cH)$
  	 is  increasing  and satisfies $I(X_{\cH})=X_{\cH}$ for all $X_{\cH} \in \eL( \overline{\R},\mathcal{H})$,  then  $I$ is a C.I. 
 \end{rem}
 
 \begin{lemm}[lower and upper extensions of a conditional indicator]\label{LUECI} Consider a conditional indicator $I$ defined on some domain $\eD_I$ which is monotone. Suppose that $E_{I}$ is a subset of $\eD_I$ containing $\eL( \overline{\R},\cH)$. Let us define:
 \bea \label{ext1} I^L(X):&=&\esssup_{\cH}\{I(Y):~Y\in E_{I}\,{\rm and}\,Y\le X\},\\ \label{ext2}
 I^U(X):&=&\essinf_{\cH}\{I(Y):~Y\in E_{I}\,{\rm and}\,Y\ge X\}.
 \eea
 Then, $ I^L$ and $I^U$ are two monotone conditional indicators defined on $\eL(\overline{\R}, \cF)$ that coincide with $I$ on $E_{I}$ and satisfies $ I^L\le I\le I^U$ on  $\eD_I$. We say that $ I^L$ and $I^U$ are   lower and upper extensions of $I$ on $E_I$. If $E_I+E_I\subseteq E_I$, then $I^L$ and $I^U$ are respectively super and sub-additive. 
 \end{lemm}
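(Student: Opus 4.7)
The strategy is to verify each assertion in the order it is stated: well-definedness and the C.I.\ properties (P1)--(P2) for $I^L$ and $I^U$ on $\eL(\overline{\R},\cF)$; their monotonicity; the coincidence $I^L=I=I^U$ on $E_I$; the sandwich $I^L\le I\le I^U$ on $\eD_I$; and finally the super/sub-additivity under $E_I+E_I\subseteq E_I$. Since $\eL(\overline{\R},\cH)\subseteq E_I$ and $\essinf_\cH(X)\le X\le\esssup_\cH(X)$, the families under $\esssup_\cH$ and $\essinf_\cH$ in (\ref{ext1})--(\ref{ext2}) are non-empty for every $X\in\eL(\overline{\R},\cF)$, so $I^L,I^U$ take values in $\eL(\overline{\R},\cH)$ and (P2) is trivial since the domain is now the full space. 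For (P1) applied to $I^L$, any admissible $Y\le X$ gives $I(Y)\le\esssup_\cH(Y)\le\esssup_\cH(X)$ by (P1) for $I$, while inserting $Y=\essinf_\cH(X)$ (which is $\cH$-measurable, hence $I(Y)=Y$ by idempotence) forces $I^L(X)\ge\essinf_\cH(X)$. Monotonicity of $I^L$ is immediate from set inclusion of the defining families as $X\le X'$; (P1) and monotonicity of $I^U$ follow by the symmetric argument using upper approximants.

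For the coincidence on $E_I$ and the sandwich on $\eD_I$, the common ingredient is the monotonicity hypothesis on $I$. If $X\in E_I$, then $I(Y)\le I(X)$ for every $Y\in E_I$ with $Y\le X$, whence $I^L(X)\le I(X)$, and the reverse inequality comes from plugging $Y=X$ into the defining family. For general $X\in\eD_I$, the same monotonicity makes $I(X)$ an $\cH$-measurable upper bound of $\{I(Y):Y\in E_I,\,Y\le X\}$, giving $I^L(X)\le I(X)$. The dual statements $I^U(X)=I(X)$ on $E_I$ and $I(X)\le I^U(X)$ on $\eD_I$ are obtained symmetrically.

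For the super/sub-additivity under $E_I+E_I\subseteq E_I$: given $X_1,X_2\in\eL(\overline{\R},\cF)$ and $Y_i\in E_I$ with $Y_i\le X_i$, the stability gives $Y_1+Y_2\in E_I$, and since $Y_1+Y_2\le X_1+X_2$ we obtain $I(Y_1+Y_2)\le I^L(X_1+X_2)$. Provided the linking inequality $I(Y_1+Y_2)\ge I(Y_1)+I(Y_2)$ holds, I would fix $Y_2$, exploit commutation of $\esssup_\cH$ with $\cH$-measurable shifts to pass to $\esssup_\cH$ first over $Y_1$ (producing $I^L(X_1)+I(Y_2)\le I^L(X_1+X_2)$) and then over $Y_2$, concluding $I^L(X_1)+I^L(X_2)\le I^L(X_1+X_2)$; sub-additivity of $I^U$ is handled dually via upper approximants. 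The main obstacle lies precisely in this linking inequality: mere monotonicity of $I$ does not deliver super-additivity of $I$ on $E_I$, so the argument appears to require an additivity-type property of $I$ on $E_I$ (linearity being the cleanest sufficient condition), and identifying the exact strengthening the authors have in mind is the step I would verify most carefully before finalizing the proof.
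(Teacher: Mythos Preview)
Your treatment of the core claims---(P1) for $I^L$ and $I^U$, monotonicity, the coincidence $I^L=I=I^U$ on $E_I$, and the sandwich $I^L\le I\le I^U$ on $\eD_I$---follows exactly the same route as the paper: bound $I(Y)$ above by $\esssup_\cH(X)$ via (P1) for $I$, insert $Y=\essinf_\cH(X)\in\eL(\overline{\R},\cH)\subseteq E_I$ for the lower bound, and use monotonicity of $I$ for the sandwich. The paper's proof omits the monotonicity of $I^L,I^U$ (you add it, and it is indeed immediate from set inclusion).

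Your hesitation about the super/sub-additivity clause is well founded. The paper's proof does not address that clause at all, and as stated it cannot be derived from monotonicity of $I$ alone. A clean counterexample: take $E_I=\eD_I=\eL(\overline{\R},\cF)$ (so $E_I+E_I\subseteq E_I$ trivially) and $I=\esssup_\cH$, which is a monotone conditional indicator. Then $I^L=I$ on $E_I$, hence everywhere, but $\esssup_\cH$ is sub-additive and in general \emph{not} super-additive (e.g.\ $X=1_A$, $Y=1_{A^c}$ with $A\notin\cH$ gives $\esssup_\cH(X+Y)=1<2=\esssup_\cH X+\esssup_\cH Y$). So the linking inequality $I(Y_1)+I(Y_2)\le I(Y_1+Y_2)$ that you identify as the crux genuinely requires an extra hypothesis on $I$ (super-additivity of $I$ on $E_I$ for $I^L$, sub-additivity for $I^U$). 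Your diagnosis is correct; the lemma as stated is missing this hypothesis, and the paper simply leaves that part unproved.
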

  \begin{proof}
First observe that, if $Y\in  E_{I}$ is such that $Y\le X$, then we have $I(Y)\le \esssup_{\cH}(Y)\le  \esssup_{\cH}(X)$ hence, taking the essential supremum, we get that  $I^L(X)\le \esssup_{\cH}(X)$. Moreover, if $X\in \eD_I$, $Y\le X$ implies, by assumption, that $I(
Y)\le I(X)$  hence  $I^L(X)\le I(X) $ for $X\in \eD_I$. On the other hand, as $\essinf_{\cH}(X)\le X$, we deduce that   $I^L(X)\ge \essinf_{\cH}(X)$. At last, if $X_{\cH}\in \eL(\overline{\R},\mathcal{H})$, then $X_{\cH} \in E_{I}$ hence  $I^L(X_{\cH})\ge X_{\cH}$. Moreover,  $I^L(X_{\cH}) \le I(X_{\cH})=X_{\cH}$ so that  $I^L(X_{\cH})=X_{\cH}$. Note that, if $X\in  E_{I}$, then  $I^L(X)\ge I(X)$. As  $I^L(X)\le I(X)$, we conclude that   $I^L(X)= I(X)$. The same types of argument hold for  $I^U$. 
 \end{proof}
 
 \begin{rem}\label{essupIndic} If $(I_k)_{k\in K}$ is a family of conditional indicators w.r.t. the $\sigma$-algebra $\cH$, then $I_1(X)=\essinf_{k\in K}I_k(X)$ and $I_2(X)=\esssup_{k\in K}I_k(X)$ are still conditional indicators w.r.t. to $\cH$ on $\eD_{I_1}=\eD_{I_2}=\cap_{k\in K}\eD_{I_k}$. As Lemma \ref{LUECI} proves the existence of upper and lower extensions,  we then deduce the following corollary.
 
 \end{rem}
 
 \begin{coro}\label{CoroOptExtens} Consider a conditional indicator $I$, w.r.t. the $\sigma$-algebra $\cH$, defined on some domain $\eD_I$,  which is monotone. Suppose that $E_{I}$ is a subset of $\eD_I$ containing $\eL( \overline{\R},\cH)$. There exists a (unique) smallest   conditional indicator $I^+$ (resp. a largest conditional indicator $I^-$) which coincides with $I$ on $E_{I}$ and such that $I^-\le I\le  I^+$ on $\eD_I$.
 \end{coro}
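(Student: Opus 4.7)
The plan is to obtain $I^+$ and $I^-$ as essential infima and suprema over the families of all admissible candidates, using Remark \ref{essupIndic} to certify that the result is still a conditional indicator. Let me describe the construction for $I^+$; the case of $I^-$ is entirely symmetric.

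First, by Lemma \ref{LUECI}, the upper extension $I^U$ is a C.I.\ defined on $\eL(\overline{\R},\cF)$, equal to $I$ on $E_I$ and satisfying $I\le I^U$ on $\eD_I$. In particular the family
\[
\cF^+:=\{J \text{ C.I.\ on } \eL(\overline{\R},\cF) : J=I \text{ on } E_I,\; J\ge I \text{ on } \eD_I\}
\]
is nonempty. Define
\[
I^+(X):=\essinf_{\cH}\{J(X) : J\in \cF^+\},\qquad X\in \eL(\overline{\R},\cF).
\]
By Remark \ref{essupIndic}, $I^+$ is itself a conditional indicator on $\eL(\overline{\R},\cF)$: the intersection of the domains is $\eL(\overline{\R},\cF)$, and property (P1) is preserved since each $J(X)$ lies in the $\cH$-measurable interval $[\essinf_{\cH}(X),\esssup_{\cH}(X)]$, which is therefore also a lower and upper bound of the essential infimum.

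Next I verify $I^+\in\cF^+$. For $X\in E_I$, every $J\in\cF^+$ satisfies $J(X)=I(X)$ a.s., hence $I^+(X)=I(X)$. For $X\in\eD_I$, every $J\in\cF^+$ satisfies $J(X)\ge I(X)$ a.s., and taking essential infimum over an $\cH$-measurable lower bound $I(X)$ gives $I^+(X)\ge I(X)$. By the very definition of essential infimum, $I^+(X)\le J'(X)$ a.s.\ for every $J'\in\cF^+$, so $I^+$ is the smallest element of $\cF^+$. Uniqueness (up to a.s.\ equality) is immediate from antisymmetry of $\le$ on $\cH$-measurable random variables. Symmetrically, the essential supremum over the nonempty family of C.I.'s coinciding with $I$ on $E_I$ and dominated by $I$ on $\eD_I$ (which contains $I^L$) yields the largest such C.I.\ $I^-$.

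The only step requiring genuine care is the invocation of Remark \ref{essupIndic} for a possibly uncountable family: this is legitimate because the essential infimum/supremum of any family of $\overline{\R}$-valued $\cH$-measurable random variables is well defined and $\cH$-measurable (by the theorem recalled at the start of Section \ref{SecDef}), and the membership property (P1) passes to the essential infimum as noted above. Everything else is formal bookkeeping once $\cF^+$ and $\cF^-$ are known to be nonempty, which is exactly what Lemma \ref{LUECI} provides.
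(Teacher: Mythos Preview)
Your proposal is correct and follows essentially the same approach as the paper's own proof: invoke Lemma \ref{LUECI} to guarantee that the families of candidate indicators are nonempty, then define $I^+$ (resp.\ $I^-$) as the essential infimum (resp.\ supremum) over that family and appeal to Remark \ref{essupIndic}. Your version is in fact more explicit than the paper's (you spell out why $I^+\in\cF^+$ and flag the uncountable-family issue), but the underlying argument is identical.
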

 \begin{proof} By Lemma \ref{LUECI}, there exists  conditional indicators $J,K$, defined on $\eL( \overline{\R},\cF)$ such that $J\le I\le K$ on  $\eD_I$. By Lemma \ref{essupIndic}, it suffices to define the indicator $I^-(X)=\esssup_{k\in K}J_k(X)$ where $(J_k)_{k\in K}$ is the non empty family of conditional indicators  that is dominated by $I$ on $\eD_I$ and coincides with $I$ on $E_I$  and the indicator $I^+(X)=\essinf_{k\in K}K_k(X)$ where $(K_k)_{k\in K}$ is the family of non empty conditional indicators that dominate $I$ on $\eD_I$  and coincides with $I$ on $E_I$.
 \end{proof}

 \section{Regularity}\label{SecReg}
 
\begin{defi} A subset $E$ of $\eL(\overline{\R},\cF)$ is said $\cH$-decomposable if, for all $H\in \cH$, and $X,Y\in E$, we have
$X1_{H}+Y1_{\Omega \setminus H}\in E$.
\end{defi}

\begin{lemm}[Averaging property] \label{HH^c}
Let $\mathcal{H}$ be a sub $\sigma$-algebra of $\mathcal{F}$   and let $I$ be a C.I. w.r.t. $\mathcal{H}$, which is defined on an $\cH$-decomposable subset $\eD_I$. Then, for all  $ X \in \eD_I $ and $ H \in \cH$, $I(X 1_{H})1_{H^c}=0$. Therefore, we have $I(X 1_{H})=I(X 1_{H})1_{H}$.
\end{lemm}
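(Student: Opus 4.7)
The plan is to use property (P1) combined with a simple localization identity for the conditional essential supremum and infimum on $\cH$-measurable sets. The $\cH$-decomposability hypothesis is used only to ensure that $X1_H$ actually lies in $\eD_I$: since $0\in\eD_I$ by Definition \ref{DefiCI} and $H\in\cH$, decomposability gives $X1_H = X1_H+0\cdot 1_{H^c}\in\eD_I$, so $I(X1_H)$ is well-defined.

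The crucial intermediate step is to establish that
\[
\esssup_{\cH}(X1_H)\,1_{H^c}=0\quad\text{and}\quad\essinf_{\cH}(X1_H)\,1_{H^c}=0.
\]
For the first identity, I would argue by the minimality characterization of the conditional essential supremum: let $S:=\esssup_{\cH}(X1_H)$ and consider the $\cH$-measurable random variable $\tilde S := S\,1_H+0\cdot 1_{H^c}$. Then $\tilde S\ge X1_H$ (on $H$ since $S\ge X1_H$, and on $H^c$ since both sides vanish), so by the minimality of $S$ we get $S\le \tilde S$, hence $S\,1_{H^c}\le 0$. The reverse inequality $S\,1_{H^c}\ge X1_H\,1_{H^c}=0$ is automatic. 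The identity for $\essinf_{\cH}$ is obtained by the symmetric dual argument (largest $\cH$-measurable minorant).

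With these identities in hand, property (P1) applied to $X1_H$ gives
\[
\essinf_{\cH}(X1_H)\le I(X1_H)\le \esssup_{\cH}(X1_H)\quad\text{a.s.,}
\]
and multiplying through by $1_{H^c}$ yields $0\le I(X1_H)\,1_{H^c}\le 0$, i.e.\ $I(X1_H)\,1_{H^c}=0$. The second claim then follows at once, writing $I(X1_H)=I(X1_H)\,1_H+I(X1_H)\,1_{H^c}=I(X1_H)\,1_H$.

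I do not anticipate a real obstacle: the only subtle point is the localization lemma for $\esssup_{\cH}$ on $\cH$-measurable sets, which is a routine consequence of the defining minimality/maximality property recalled in the preceding theorem and is arguably the most worth stating explicitly in the write-up.
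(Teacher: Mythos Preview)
Your proof is correct and follows essentially the same route as the paper: both arguments use (P1) to place $I(X1_H)$ in $\csupp_{\cH}(X1_H)$ and then invoke the localization identity $\esssup_{\cH}(X1_H)1_{H^c}=\essinf_{\cH}(X1_H)1_{H^c}=0$ to conclude. The only difference is that you spell out the minimality/maximality argument for this identity explicitly, whereas the paper simply writes $\csupp_{\cH}(X1_{H})1_{H^c}=\csupp_{\cH}(X1_{H}1_{H^c})=\{0\}$ and cites ``the properties satisfied by the essential infimum and supremum.''
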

\begin{proof}
Consider $ X \in \mathbb{D}_I $ and $ H \in \mathcal{H}$. As $I(X 1_{H^c}) 1_H \in \csupp_{\mathcal{H}}(X1_{H^c}) 1_H$ and $\csupp_{\mathcal{H}}(X1_{H^c}) 1_H=\csupp_{\mathcal{H}}(X1_{H^c} 1_H)=\{0\} $ by the properties satisfied by the essential infimum and supremum, we get  $I(X 1_{H})1_{H^c}=0$. Therefore, we have the equality  $I(X 1_{H})=I(X 1_{H})1_{H}+I(X 1_{H})1_{H^c}=I(X 1_{H})1_{H}$. 
\end{proof}

\begin{defi}
Consider $\mathcal{H}$  a sub $\sigma$-algebra of $\mathcal{F}$. A conditional indicator $I$ w.r.t. $\mathcal{H}$  is said  regular if $\eD_I$ is $\cH$-decomposable and, for all  $ X ,Y\in \eD_I $ and $ H \in \cH$, we have:

$$X 1_H=Y 1_H \Rightarrow I(X) 1_H= I(Y) 1_H.$$
\end{defi}

The proof of the following lemma is trivial:

\begin{lemm}\label{1H} Let $I$ be a C.I. w.r.t. $\mathcal{H}$, which is defined on an $\cH$-decomposable subset $\eD_I$. The following statements are equivalent.
 \begin{enumerate}

\item  I is regular.
\item $I(X 1_{H})=I(X )1_{H}$, for all  $ X \in \eD_I $ and $ H \in \cH$.
\item  $I(X1_{H}+ Y1_{H^c} )=I(X)1_{H}+I(Y)1_{H^c}$,  for all  $ X,Y \in \eD_I $ and $ H \in \cH$.
\end{enumerate}
\end{lemm}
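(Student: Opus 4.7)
The plan is to prove the three-way equivalence by a short cycle $(1)\Rightarrow(2)\Rightarrow(3)\Rightarrow(1)$, using only the averaging property of Lemma \ref{HH^c} (which applies because $\eD_I$ is assumed $\cH$-decomposable) together with the definition of regularity. Throughout, I would fix $X,Y\in\eD_I$ and $H\in\cH$ and work with $\cH$-measurable cuts by multiplying by $1_H$ and $1_{H^c}$.

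For $(1)\Rightarrow(2)$, I would observe that $X1_H$ and $X$ agree on $H$, i.e.\ $(X1_H)1_H = X1_H = X\cdot 1_H$, so regularity yields $I(X1_H)1_H = I(X)1_H$. Lemma \ref{HH^c} gives $I(X1_H)1_{H^c}=0$, so $I(X1_H) = I(X1_H)1_H = I(X)1_H$.

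For $(2)\Rightarrow(3)$, I would set $Z := X1_H + Y1_{H^c}\in \eD_I$ (using $\cH$-decomposability). Then $Z1_H = X1_H$ and $Z1_{H^c} = Y1_{H^c}$, so by (2) applied twice,
\[
I(Z)1_H = I(Z1_H) = I(X1_H) = I(X)1_H,\qquad I(Z)1_{H^c} = I(Y1_{H^c}) = I(Y)1_{H^c}.
\]
Adding these two identities (which cover $\Omega$) gives $I(Z) = I(X)1_H + I(Y)1_{H^c}$, which is (3).

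For $(3)\Rightarrow(1)$, suppose $X1_H = Y1_H$. The trick is to form the mixture $Z := X1_H + Y1_{H^c}$: on the one hand (3) gives $I(Z) = I(X)1_H + I(Y)1_{H^c}$, and on the other hand the hypothesis yields $Z = Y1_H + Y1_{H^c} = Y$, so $I(Z) = I(Y)$. Multiplying the resulting identity by $1_H$ gives $I(Y)1_H = I(X)1_H$, which is precisely regularity. There is no real obstacle here; the only point worth double-checking is that all the expressions $X1_H$, $Y1_{H^c}$, $X1_H+Y1_{H^c}$ indeed lie in $\eD_I$, which follows from $\cH$-decomposability (taking $Y=0$ or $X=0$ when needed, since $0\in\eD_I$ by Definition \ref{DefiCI}).
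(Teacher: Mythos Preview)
Your proof is correct and complete. The paper itself omits the proof of this lemma, stating merely that it is trivial, so there is no detailed argument to compare against; your cycle $(1)\Rightarrow(2)\Rightarrow(3)\Rightarrow(1)$ using Lemma~\ref{HH^c} and the definition of regularity is exactly the kind of routine verification the authors had in mind.
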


\begin{prop}
Let $\mathcal{H}$ be  a sub $\sigma$-algebra of $\mathcal{F}$  and let $I$ be a regular and $\cH$-positively-homogeneous conditional indicator w.r.t. $\mathcal{H}$. Then, for all  $ X \in \mathbb{D}_I $ and $ h \in \mathbb{L}^{0}(\R , \mathcal{H})$, we have 
  $$I(h X )=h^+ I(X)+ h^- I(-X).$$
\end{prop}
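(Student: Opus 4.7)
The plan is to reduce $I(hX)$ to the two cases where $h \ge 0$ and where $h \le 0$ by decomposing along the $\cH$-measurable set $H := \{h \ge 0\}$, then invoke regularity and $\cH$-positive homogeneity separately on each piece.

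First I would set $H := \{h \ge 0\} \in \cH$, so that $h^+ = h \mathbf{1}_H$ and $h^- = -h \mathbf{1}_{H^c}$, and in particular $h^+ \mathbf{1}_{H^c} = 0$ and $h^- \mathbf{1}_H = 0$. Then the key algebraic identity is
\[
hX = (h^+ X)\mathbf{1}_H + (h^-(-X))\mathbf{1}_{H^c},
\]
because on $H$ one has $hX = h^+ X$, and on $H^c$ one has $hX = -h^- X = h^-(-X)$. Here I implicitly assume $-X \in \eD_I$, which is necessary for the right-hand side of the claim to be meaningful; the $\cH$-positive homogeneity assumption then yields $h^+ X \in \eD_I$ and $h^-(-X) \in \eD_I$, and the $\cH$-decomposability that comes with regularity guarantees that the combined random variable $hX$ also belongs to $\eD_I$.

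Next, I would apply item 3 of Lemma \ref{1H} (which characterises regularity) to obtain
\[
I(hX) = I(h^+ X)\mathbf{1}_H + I(h^-(-X))\mathbf{1}_{H^c}.
\]
By $\cH$-positive homogeneity applied to the $\cH$-measurable nonnegative multipliers $h^+$ and $h^-$, this becomes
\[
I(hX) = h^+ I(X)\mathbf{1}_H + h^- I(-X)\mathbf{1}_{H^c}.
\]
Finally, since $h^+ \mathbf{1}_{H^c} = 0$ and $h^- \mathbf{1}_H = 0$, and using the convention $0 \times \pm\infty = 0$ fixed in the paper to handle the case where $I(X)$ or $I(-X)$ is infinite, we have $h^+ I(X)\mathbf{1}_H = h^+ I(X)$ and $h^- I(-X)\mathbf{1}_{H^c} = h^- I(-X)$, giving $I(hX) = h^+ I(X) + h^- I(-X)$.

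The only delicate step is the careful bookkeeping with the $\{0,\pm\infty\}$ convention on $H^c$ (resp.\ $H$), since it is tempting to cancel indicators against infinite values without justification; the paper's explicit convention $0\times(\pm\infty)=0$ is precisely what legitimises dropping the indicators in the last line. Everything else is a direct application of the already-established regularity characterisation and the definition of $\cH$-positive homogeneity.
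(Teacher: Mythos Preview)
Your proof is correct and follows essentially the same route as the paper: both decompose along the $\cH$-measurable set $\{h\ge 0\}$, invoke the regularity characterisation of Lemma \ref{1H}, and then apply $\cH$-positive homogeneity to pull out $h^+$ and $h^-$. The only cosmetic difference is that the paper uses item 2 of Lemma \ref{1H} to write $I(hX)=I(hX1_{\{h\ge0\}})+I(hX1_{\{h<0\}})$ directly, while you use item 3 and then remove the indicators at the end via $h^+1_{H^c}=h^-1_H=0$; your added remarks on the $0\times(\pm\infty)$ convention and on the implicit hypothesis $-X\in\eD_I$ are careful observations that the paper leaves tacit.
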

\begin{proof}
Let  $ X \in \mathbb{D}_I $ and $ h \in \mathbb{L}^{0}(\R , \mathcal{H})$. By Propostion \ref{1H}, we get that
 \bean 
 I(h X )&=&I(hX 1_{\{ h \ge 0\}})+ I(hX1_{\{ h < 0\}})\\
 &=& I(h^+ X)+ I(-h^- X)
 \\
 &=& h^+ I(X)+h^- I(-X) \eean
\end{proof}
\begin{prop}
If a conditional indicator is conditionally convex and $\eD_I$ is $\cH$-decomposable, 
then it is regular.
\end{prop}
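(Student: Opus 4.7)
The plan is to invoke Lemma~\ref{1H} and reduce regularity to establishing the identity $I(X 1_H + Y 1_{H^c}) = I(X) 1_H + I(Y) 1_{H^c}$ for arbitrary $X, Y \in \eD_I$ and $H \in \cH$. Set $Z := X 1_H + Y 1_{H^c}$, which lies in $\eD_I$ by $\cH$-decomposability, and prove the two inequalities $I(Z) \le I(X) 1_H + I(Y) 1_{H^c}$ and $I(Z) \ge I(X) 1_H + I(Y) 1_{H^c}$.

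For the upper bound, I would apply conditional convexity with the $\cH$-measurable weight $\alpha_\cH = 1_H \in \eL([0,1], \cH)$ to the pair $(X,Y)$; this yields directly $I(Z) = I(1_H X + (1-1_H) Y) \le 1_H I(X) + 1_{H^c} I(Y)$.

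For the reverse inequality, the trick is to feed $Z$ itself back into the convexity inequality. Using the paper's arithmetic conventions on $\overline{\R}$, one checks that $1_H Z + 1_{H^c} X = X$ and $1_H Y + 1_{H^c} Z = Y$ a.s. Applying conditional convexity to the pairs $(Z,X)$ and $(Y,Z)$, each with weight $1_H$, gives $I(X) \le 1_H I(Z) + 1_{H^c} I(X)$ and $I(Y) \le 1_H I(Y) + 1_{H^c} I(Z)$. Multiplying the first relation by $1_H$ and the second by $1_{H^c}$ cancels the spurious terms on the right and, after summing, produces $I(X) 1_H + I(Y) 1_{H^c} \le I(Z)$. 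Combining the two bounds closes the argument, and Lemma~\ref{1H} then delivers regularity.

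The main subtlety I anticipate is noticing the self-referential second use of conditional convexity with $Z$ reinserted into the convex combination: the straightforward application of convexity to $(X,Y)$ only supplies one direction, and it is this extra step, relying crucially on $\cH$-decomposability (so that $Z \in \eD_I$ and the identities $1_H Z + 1_{H^c} X = X$ etc.\ hold), that furnishes the missing inequality. A minor companion verification is that these pointwise a.s.\ identities genuinely hold under the convention $0 \times (\pm\infty) = 0$ when $X$ or $Y$ take extended-real values.
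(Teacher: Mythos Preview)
Your argument is correct and mirrors the paper's: both apply conditional convexity twice with the indicator weight $1_H$, once in the forward direction to get the upper bound and once ``backwards'' to get the lower bound, then appeal to Lemma~\ref{1H}. The only cosmetic difference is that the paper targets characterization (2) of Lemma~\ref{1H} (i.e.\ the special case $Y=0$) and closes the reverse inequality via the averaging property Lemma~\ref{HH^c}, whereas you aim directly at characterization (3) with general $Y$, which lets you finish through the identity $1_H I(Z)+1_{H^c}I(Z)=I(Z)$ without an explicit call to Lemma~\ref{HH^c}.
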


\begin{proof}
Let us consider  $H \in \cH$ and  $X\in \cL^0(\R, \cF) $. Since $1_H \in \cL^0([0,1], \cH) $ and $1_{H^c}=1-1_H$, the  conditional convexity of $I$ implies that 
$$I(1_H X) = I(1_H X +1_{H^c} 0 )\le   1_H I(X)+ 1_{H^c}I(0)  =  1_H I(X).$$ 
 Similarly, we have
$$I(X) = I(1_H (1_H X) + 1_{H^c} (1_{H^c} X)) \le 1_HI(1_H X) + 1_{H^c} I(1_{H^c} X).$$
By Lemma \ref{HH^c}, we deduce that $1_H I(X) \le 1_H I(1_H X) =I(1_H X).$ Therefore, $I(1_H X) = 1_H I(X).$
The conclusion follows by Lemma \ref{1H}.
\end{proof}

\begin{lemm}\label{add} Let $I$ be a C.I. w.r.t. the $\sigma$-algebra $\cH$. If $I$ is sub-additive and $\eD_I$ is $\cH$-decomposable, then we have $1_H I(X) \le  I(1_HX)$  for all $H\in \cH$ and $X \in \cL^0(\R, \cF).$ Moreover,  if $I$ is additive, then $I$ is regular. 
\end{lemm}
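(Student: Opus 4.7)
The plan is to split $X$ as $X = X\mathbf{1}_H + X\mathbf{1}_{H^c}$ and exploit sub-additivity (or additivity) together with the averaging property of Lemma \ref{HH^c}. First, I would note that $\cH$-decomposability of $\eD_I$ combined with $0 \in \eD_I$ guarantees that both $X\mathbf{1}_H = X\mathbf{1}_H + 0\cdot\mathbf{1}_{H^c}$ and $X\mathbf{1}_{H^c}$ belong to $\eD_I$, so that the expressions $I(X\mathbf{1}_H)$ and $I(X\mathbf{1}_{H^c})$ are well-defined. This takes care of the implicit assumption $X\in \eD_I$ in the statement.

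For the first inequality, I would apply sub-additivity to the decomposition $X = X\mathbf{1}_H + X\mathbf{1}_{H^c}$ to get
\[
I(X) \;\le\; I(X\mathbf{1}_H) + I(X\mathbf{1}_{H^c}).
\]
Multiplying through by $\mathbf{1}_H$ and invoking the averaging property of Lemma \ref{HH^c}, which yields $I(X\mathbf{1}_{H^c})\mathbf{1}_H = 0$ and $I(X\mathbf{1}_H)\mathbf{1}_H = I(X\mathbf{1}_H)$, immediately gives the desired estimate $\mathbf{1}_H I(X) \le I(\mathbf{1}_H X)$.

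For the second part, assuming additivity, the same decomposition now yields the equality $I(X) = I(X\mathbf{1}_H) + I(X\mathbf{1}_{H^c})$. By Lemma \ref{HH^c}, the two summands are supported on disjoint sets ($H$ and $H^c$ respectively), so multiplying by $\mathbf{1}_H$ collapses the right-hand side to $I(X\mathbf{1}_H)$, giving $I(X)\mathbf{1}_H = I(X\mathbf{1}_H)$. Regularity then follows from the equivalence in Lemma \ref{1H}.

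I do not expect any serious obstacle here: the argument is essentially bookkeeping with indicator functions once the averaging property is in hand. The only subtlety worth flagging is the preliminary verification that $X\mathbf{1}_H, X\mathbf{1}_{H^c} \in \eD_I$, which is where the $\cH$-decomposability hypothesis is actually used; without it, one could not even write the sub-additivity or additivity identity that drives the proof.
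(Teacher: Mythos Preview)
Your proposal is correct and follows essentially the same route as the paper: decompose $X = X\mathbf{1}_H + X\mathbf{1}_{H^c}$, apply sub-additivity (resp.\ additivity), multiply by $\mathbf{1}_H$, and use the averaging property of Lemma~\ref{HH^c} to kill the cross term. Your explicit check that $X\mathbf{1}_H,\,X\mathbf{1}_{H^c}\in\eD_I$ via $\cH$-decomposability is a welcome clarification that the paper leaves implicit.
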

\begin{proof}
 Consider $H\in \cH$ and $X \in \cL^0(\R, \cF).$ 
 If $I$ is sub-additive, then by Lemma  \ref{HH^c}, we have:
\bean 1_H I(X)&=& 1_H I(1_H X + 1_{H^c} X)\le 1_H I(1_H X) + 1_{H} I(1_{H^c} X)\\
&\le & 1_H I(1_H X)=I(1_H X).\eean
If $I$ is additive, then 
\bean 1_H I(X)&=&  1_H I(1_H X + 1_{H^c} X) = 1_H I(1_H X) + 1_{H} I(1_{H^c} X)\\
&=& 1_H I(1_H X)=I(1_H X).\eean
\end{proof}

\begin{prop}\label{RegExt} Let $I^L$ and $I^U$ be the extensions of $I$ defined by (\ref{ext1}) and (\ref{ext2}) in Lemma \ref{LUECI} when $E_I=\eD_I$. Suppose that $I$ is regular. Then, $I^L$ and $I^U$ are regular.
\end{prop}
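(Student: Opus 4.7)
My approach verifies the characterization of regularity given by Lemma \ref{1H}(2): for each $X \in \eL(\overline{\R}, \cF) = \eD_{I^L}$ and each $H \in \cH$, I aim to show $I^L(X 1_H) = I^L(X) 1_H$. Since $\eD_{I^L}$ is trivially $\cH$-decomposable, Lemma \ref{HH^c} applied to $I^L$ already yields $I^L(X 1_H) = I^L(X 1_H) 1_H$, so it suffices to prove the two inequalities between $\cH$-measurable variables. The treatment of $I^U$ will be entirely symmetric, obtained by reversing inequalities and swapping the roles of $\essinf$ and $\esssup$ throughout.

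For the inequality $I^L(X) 1_H \le I^L(X 1_H)$, I would take an arbitrary $Y \in \eD_I$ with $Y \le X$. Because $0 \in \eD_I$ and $\eD_I$ is $\cH$-decomposable, $Y 1_H = Y 1_H + 0 \cdot 1_{H^c} \in \eD_I$, and clearly $Y 1_H \le X 1_H$; the regularity of $I$ then gives $I(Y) 1_H = I(Y 1_H) \le I^L(X 1_H)$. To promote this pointwise bound into an essential-supremum bound, I introduce the $\cH$-measurable witness $\tilde A := I^L(X 1_H) + \esssup_{\cH}(X) 1_{H^c}$; it dominates every $I(Y)$ from the defining family, since on $H$ this follows from the inequality just established, while on $H^c$ property (P1) gives $I(Y) \le \esssup_{\cH}(Y) \le \esssup_{\cH}(X)$. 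Hence $I^L(X) \le \tilde A$, and multiplication by $1_H$ yields the desired bound.

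For the converse $I^L(X 1_H) \le I^L(X) 1_H$, the core construction is as follows: given $Z \in \eD_I$ with $Z \le X 1_H$, set $Y := Z 1_H + \essinf_{\cH}(X) 1_{H^c}$. That $Y \in \eD_I$ follows from the observation that (P2) with $0 \in \eD_I$ forces $\eL(\overline{\R}, \cH) \subseteq \eD_I$, hence $\essinf_{\cH}(X) \in \eD_I$, combined with the $\cH$-decomposability of $\eD_I$. The bound $Y \le X$ is immediate on both $H$ and $H^c$. Regularity of $I$ in its additive form (Lemma \ref{1H}(3)), together with idempotence on $\cH$-measurable variables, gives $I(Y) = I(Z) 1_H + \essinf_{\cH}(X) 1_{H^c}$, so $I(Z) 1_H = I(Y) 1_H \le I^L(X) 1_H$. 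Finally, $\tilde B := I^L(X) 1_H$ dominates every $I(Z)$ in the defining family of $I^L(X 1_H)$, because on $H$ this is the inequality just proved, while on $H^c$ property (P1) gives $I(Z) \le \esssup_{\cH}(Z) \le \esssup_{\cH}(X 1_H)$, and the latter vanishes on $H^c$ since $X 1_H = 0$ there. Taking the essential supremum over $Z$ gives the required inequality.

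The principal obstacle is the converse inequality. One cannot naively use $Z 1_H$ as a candidate in the family defining $I^L(X)$, because if $X$ is strictly negative on a subset of $H^c$ then $Z 1_H = 0 > X$ there and membership in the family fails. The remedy is the gluing with $\essinf_{\cH}(X)$, whose membership in $\eD_I$ is exactly what (P2) supplies. The remaining work is bookkeeping to convert the pointwise inequalities into essential-supremum inequalities, facilitated by the fact that $I^L(X 1_H)$ automatically vanishes on $H^c$.
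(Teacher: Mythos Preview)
Your proof is correct and follows essentially the same route as the paper's own argument: the key construction in both is the gluing $Z1_H+\essinf_{\cH}(X)1_{H^c}$ to pass from a competitor below $X1_H$ to one below $X$, and the reverse use of $Y1_H$ with the regularity of $I$. The only cosmetic difference is that you invoke Lemma~\ref{HH^c} for $I^L$ at the outset to get $I^L(X1_H)=I^L(X1_H)1_H$, whereas the paper first proves $I^L(X1_H)1_H=I^L(X)1_H$ and then derives the vanishing on $H^c$ by substituting $X\mapsto X1_H$ and $H\mapsto\Omega\setminus H$; your explicit $\cH$-measurable witnesses $\tilde A,\tilde B$ are just a slightly more verbose packaging of the same essential-supremum argument.
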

\begin{proof}
Let $H\in \cH$ and $X\in \eL(\overline{\R}, \cF)$. Consider any $ Y\le X1_H$ where $Y\in \eD_I$.  Let us define $Z=Y1_{H}+(\essinf_{\cH}X)1_{\Omega\setminus H}$. Then, $Z\le X$ and $Z\in \eD_I$ so that $I(Z)\le I^L(X)$. As $I$ is regular by assumption, we deduce that 
$I(Y)1_{H}+(\essinf_{\cH}X)1_{\Omega\setminus H}\le I^L(X)$. Therefore, $I(Y)1_{H}\le  I^L(X)1_{H}$. Taking the essential supremum, we deduce that $I^L(X1_H)1_{H}\le  I^L(X)1_{H}$. On the other hand, for any $Y\le X$ such that $Y\in \eD_I$, we have $Y1_{H}\le X1_{H}$ and $Y1_{H}\in \eD_I$. Therefore,  $I^L(X1_{H})\ge I(Y1_{H})=I(Y)1_{H}$. Taking the essential supremum, we deduce that  $I^L(X1_{H})\ge I^L(X) 1_{H}$. With the first part of the proof, we deduce that  $I^L(X1_{H})1_{H}=I^L(X) 1_{H}$ for any $X\in \eL(\overline{\R}, \cF)$ and $H\in \cH$. Replacing $X$ by $X1_{H}$ and $H$ by $\Omega\setminus H$, we then deduce the equality
$I^L(X1_{H})1_{\Omega\setminus H}=I^L(X1_{H}1_{\Omega\setminus H})1_{\Omega\setminus H}=I^L(0)1_{\Omega\setminus H}=0$. Therefore, we have  $I^L(X1_{H})=I^L(X1_{H})1_{H}$ hence  $I^L(X1_{H})=I^L(X) 1_{H}$. The conclusion follows by Lemma \ref{1H}. The reasoning is similar for $I^U$.
\end{proof}

\begin{coro}\label{coroExt}Consider a conditional indicator $I$, w.r.t. the $\sigma$-algebra $\cH$, defined on some domain $\eD_I$,  which is monotone and regular. Suppose that $E_{I}$ is a subset of $\eD_I$ containing $\eL( \overline{\R},\cH)$. There exists a (unique) smallest  regular  conditional indicator $I^+$ (resp. a largest regular conditional indicator $I^-$) which coincides with $I$ on $E_{I}$ and such that $I^-\le I\le  I^+$ on $\eD_I$.
 \end{coro}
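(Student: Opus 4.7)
The plan is to mirror the proof of Corollary \ref{CoroOptExtens}, but to carry the regularity property throughout the construction. Two ingredients are needed: first, the non-emptiness of the families of \emph{regular} conditional indicators that dominate (resp. are dominated by) $I$ on $\eD_I$ and coincide with $I$ on $E_I$; second, the stability of regularity under taking essential infima and suprema of such families.

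For the first ingredient, I would invoke Proposition \ref{RegExt}. Applying Lemma \ref{LUECI} with $E_I$ replaced by $\eD_I$ produces monotone extensions $I^L$ and $I^U$ defined on all of $\eL(\overline{\R},\cF)$, equal to $I$ on $\eD_I$ (hence on $E_I$), with $I^L\le I\le I^U$ on $\eD_I$. Proposition \ref{RegExt} asserts that $I^L$ and $I^U$ are regular. Thus the family $\cJ^{-}$ of regular conditional indicators $J$ (on $\eL(\overline{\R},\cF)$) satisfying $J\le I$ on $\eD_I$ and $J=I$ on $E_I$ contains $I^L$, and is in particular non empty; symmetrically, $I^U$ belongs to the corresponding family $\cJ^{+}$ of regular dominators.

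The second ingredient requires showing that essential infima and suprema of families of regular conditional indicators remain regular conditional indicators. Remark \ref{essupIndic} already yields the C.I.\ part, so only the regularity condition has to be checked. For any family $(J_k)_{k\in K}$ of regular C.I.\ and any $H\in\cH$, $X\in\eL(\overline{\R},\cF)$, I would use that $J_k(X\mathbf{1}_H)=J_k(X)\mathbf{1}_H$ for each $k$ and that $J_k(X)\mathbf{1}_H$ agrees with $J_k(X)$ on $H$ and with $0$ on $H^c$. It then follows from the basic properties of the essential supremum that
\[
\esssup_{k}J_k(X\mathbf{1}_H)=\esssup_{k}\bigl(J_k(X)\mathbf{1}_H\bigr)=\bigl(\esssup_{k}J_k(X)\bigr)\mathbf{1}_H,
\]
and analogously for $\essinf$. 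By Lemma \ref{1H}, the resulting indicator is regular.

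Combining both steps, I would define $I^{-}(X):=\esssup_{J\in\cJ^{-}}J(X)$ and $I^{+}(X):=\essinf_{J\in\cJ^{+}}J(X)$. By the previous paragraph, each is a regular C.I.; by construction $I^{-}\le I\le I^{+}$ on $\eD_I$; and since every $J$ in the relevant family coincides with $I$ on $E_I$, so do $I^{-}$ and $I^{+}$. Maximality and minimality, as well as a.s.\ uniqueness, are then immediate from the definition via essential supremum/infimum. The only genuinely delicate step I expect is the stability of regularity under the operations $\essinf$ and $\esssup$ taken over possibly uncountable families, which is why I would spell out the localisation on $H$ and $H^c$ carefully rather than treat it as routine.
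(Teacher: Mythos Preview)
Your proof is correct and follows essentially the same route as the paper: restrict the families in the proof of Corollary \ref{CoroOptExtens} to regular conditional indicators, invoking Proposition \ref{RegExt} (with $E_I=\eD_I$) for non-emptiness. The paper leaves the stability of regularity under $\esssup$/$\essinf$ implicit, so your explicit verification via localisation on $H$ and $H^c$ is a welcome clarification rather than a departure.
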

\begin{proof} It suffices to repeat the proof of Corollary \ref{CoroOptExtens} by restricting the families to the regular indicators. Indeed,  existence holds by Proposition \ref{RegExt}.
\end{proof}

 \begin{lemm} Consider the conditional expectation $I(X)=E(X|\cH)$ for $X\in \eD_I$ where $\eD_I=\eL( \overline{\R},\cH)\cup \mathbb{L}^{1}(\R,\cF)\cup \mathbb{L}^{0}(\R_+,\cF)\cup \mathbb{L}^{0}(\R_-,\cF)$. Suppose that $E_I=\eD_I$, see Corollary \ref{coroExt}. Then, there  exists regular extensions $I^-$ and $I^+$ of the conditional expectation to the whole set $\eL( \overline{\R},\cF)$.
 \end{lemm}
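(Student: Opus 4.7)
The plan is to invoke Corollary \ref{coroExt} directly, with $I(X)=E(X|\cH)$ on $\eD_I$ and $E_I=\eD_I$. Since $\eL(\overline{\R},\cH)\subseteq\eD_I=E_I$ by definition, the only nontrivial task is to verify that $I$ is a monotone and regular conditional indicator on $\eD_I$; the corollary then automatically delivers the smallest regular extension $I^+$ and the largest regular extension $I^-$ on $\eL(\overline{\R},\cF)$.

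To define $I$ unambiguously on $\eD_I$ I would proceed piecewise: the identity on $\eL(\overline{\R},\cH)$; the usual conditional expectation on $\mathbb{L}^1(\R,\cF)$; and the monotone extension $E(X|\cH)=\lim_n E(X\wedge n|\cH)\in\eL(\overline{\R}_+,\cH)$ on $\mathbb{L}^0(\R_+,\cF)$, with the symmetric definition on $\mathbb{L}^0(\R_-,\cF)$. These definitions agree on overlaps, so $I$ is well defined. Property (P1), namely $I(X)\in[\essinf_\cH X,\esssup_\cH X]$, is the classical inequality for integrable $X$, is trivial on the $\cH$-measurable piece (where both bounds collapse to $X$), and extends to the one-signed pieces because both the conditional expectation and the essential infimum/supremum commute with monotone limits. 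Monotonicity of $I$ and the regularity identity $E(X1_H|\cH)=1_H E(X|\cH)$ for $H\in\cH$ are classical on $\mathbb{L}^1$ and pass to the positive and negative cones by monotone convergence; combined with the $\cH$-decomposability of $\eD_I$ (again checked piece by piece), Lemma \ref{1H} gives the desired regularity in the sense of Section \ref{SecReg}.

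The main obstacle is property (P2): one must check that $\eD_I+\eL(\overline{\R},\cH)\subseteq\eD_I$. This is delicate because, for instance, the sum of an integrable real-valued variable with a $\cH$-measurable variable that is infinite on a set of positive measure is not integrable. Here I would lean on the conventions $r\pm\infty=\pm\infty$, $\infty-\infty=0$, and $0\cdot(\pm\infty)=0$ recalled at the beginning of Section \ref{SecDef}: splitting on the $\cH$-measurable sets $\{Y=+\infty\}$, $\{Y=-\infty\}$ and $\{Y\in\R\}$ and regluing via $\cH$-decomposability shows that $X+Y$ lands, up to these identifications, in one of the four constituent subsets of $\eD_I$. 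Once (P2) is established, all hypotheses of Corollary \ref{coroExt} are verified and the existence of $I^-$ and $I^+$ is immediate.
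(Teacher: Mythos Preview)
Your reduction to Corollary~\ref{coroExt} hinges on verifying (P2), and that is where the argument breaks. Splitting $\Omega$ according to $\{Y=+\infty\}$, $\{Y=-\infty\}$ and $\{Y\in\R\}$ disposes of the first two regions, but on $\{Y\in\R\}$ nothing forces the $\cH$-measurable shift $Y$ to be integrable. If $X\in\mathbb{L}^{1}(\R,\cF)$ changes sign and $Y\in\eL(\R,\cH)$ satisfies $E|Y|=\infty$, then $X+Y$ is in general neither integrable, nor $\cH$-measurable, nor of constant sign, so it lies in none of the four pieces of $\eD_I$. A concrete obstruction: take $\Omega=\N\times\{-1,1\}$ with a product probability whose first marginal has infinite mean, $\cH$ the $\sigma$-algebra generated by the first coordinate, $Y(n,s)=n$ and $X(n,s)=s$. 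Then $X+Y$ is not in $\eD_I$. Hence $\eD_I+\eL(\overline{\R},\cH)\not\subseteq\eD_I$, and a bare appeal to Corollary~\ref{coroExt} is not justified.

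The paper proceeds differently, and constructively. Rather than checking abstract hypotheses, it \emph{computes} the extensions. Approximating $X^+$ by $X^n=X^+\wedge n$ and using that $X^n-X^-\le X$ with $X^n-X^-\in\mathbb{L}^{1}\subseteq\eD_I$ whenever $X^-$ is integrable, the defining essential supremum for $I^-$ gives $I^-(X)\ge E(X^n|\cH)-E(X^-|\cH)$; letting $n\to\infty$ yields $I^-(X)=E(X^+|\cH)-E(X^-|\cH)$ for such $X$, and regularity on the $\cH$-sets $\{k\le E(X^-|\cH)<k+1\}$ upgrades this to all $X$ with $E(X^-|\cH)<\infty$. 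The symmetric computation gives $I^+(X)=E(X^+|\cH)-E(X^-|\cH)$ on $\{E(X^+|\cH)<\infty\}$. These explicit formulas are the real content of the lemma: once written down, they can be checked directly to define regular conditional indicators on all of $\eL(\overline{\R},\cF)$, so the failure of (P2) on the original domain becomes irrelevant.
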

 \begin{proof}
Consider $X=X^+-X^-$, supposed to be integrable, where we recall that  $X^+=\max(X,0)\ge 0$ and  $X^-=-\min(X,0)\ge 0$. Then, $X^+=\lim_n X^n$ where $X^n=X^+\wedge n$, $n\ge 1$, is an increasing sequence of integrable random variables. Then, we get that $X^n-X^+\in E_I=\eD_I$ and $X^n\le X$ hence we get that $I^-(X)\ge I(X^n)$. Taking the limit, we get that $I^-(X)\ge I(X)$ so that $I^-(X)= I(X)$ for all $X=X^+-X^-$ where $X^-$ is integrable. More generally, if $E(X^-|\cH)<\infty$ a.s., by regularity of $I^-$, we get that
 
\bean I^-(X)&=&\sum_{k=0}^\infty I^-(X)1_{\{k\le E(X^-|\cH \}<k+1\}}\\
&=&\sum_{k=0}^\infty I^-(X1_{\{k\le E(X^-|\cH \}<k+1\}})1_{\{k\le E(X^-|\cH \}<k+1\}}.
\eean
As $X1_{\{k\le E(X^-|\cH \}<k+1\}}$ is of the form $X^+-X^-$  where $X^-$ is integrable, we deduce by above that 
\bean I^-(X1_{\{k\le E(X^-|\cH \}<k+1\}})&=&I(X1_{\{k\le E(X^-|\cH \}<k+1\}})\\
&=&(E(X^+|\cH)-E(X^-|\cH))1_{\{k\le E(X^-|\cH \}<k+1\}}\eean
 and, finally  $I^-(X)=E(X^+|\cH)-E(X^-|\cH)$ for every $X$ such that we have $E(X^-|\cH)<\infty$. Similarly, we have $I^+(X)=E(X^+|\cH)-E(X^-|\cH)$ for every $X$ such that $E(X^+|\cH)<\infty$. Note that $I^-$ and $I^+$ are natural extensions of the conditional expectation with the conventions $+\infty -\R=\{+\infty\}$ and $\R-\infty=\{-\infty\}$.
  \end{proof}

\section{Dual indicators}\label{SecDI}	
  
  \begin{defi}
Let $I$ be a C.I. on a  domain  $\mathbb{D}_I$. The dual indicator $I^*$ of $I$ is defined on $\mathbb{D}_{I^*}= -\mathbb{D}_I$  as
 $I^*(X)= -I(-X)$, $X \in \mathbb{D}_{I^*} .$ If $I=I^*$, we say that $I$ is self-dual.
\end{defi}

\begin{prop} The dual $I^*$ of a C.I. $I$ is still a C.I. such that $(I^*)^*=I$  and we have:
\begin{itemize}
\item [1)] If $I $ is monotone, then $I^*$ is monotone.
\item [2)] If $I$ is $\mathcal{H}$-translation invariant, then $I^*$ is $\mathcal{H}$-translation invariant.
\item [3)]  If $I$ is super-linear, then $I^*$ is sub-linear.
\end{itemize}
\end{prop}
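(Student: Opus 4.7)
The plan is to unfold the definition $I^*(X) = -I(-X)$ everywhere and to exploit two elementary symmetries: (a) the set $\eL(\overline{\R},\cH)$ is stable under $X \mapsto -X$, and (b) for any $X \in \eL(\overline{\R},\cF)$ one has $-\csupp_{\cH}(-X) = \csupp_{\cH}(X)$, which follows at once from the identity $-\esssup_{\cH}(-X) = \essinf_{\cH}(X)$ built into the definition of the essential infimum.

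First I would verify that $I^*$ is itself a C.I. Property (P1) for $I^*$ asks $I^*(X) = -I(-X) \in \csupp_{\cH}(X)$; since $-X \in \eD_I$, (P1) applied to $I$ gives $I(-X) \in [\essinf_{\cH}(-X),\esssup_{\cH}(-X)]$, and negating this interval yields $[\essinf_{\cH}(X),\esssup_{\cH}(X)]$ by (b). Property (P2) for $I^*$ reads $-\eD_I + \eL(\overline{\R},\cH) \subseteq -\eD_I$, which by (a) is equivalent to (P2) for $I$. The involution $(I^*)^*=I$ is then immediate: the domain is $-(-\eD_I) = \eD_I$ and $(I^*)^*(X) = -I^*(-X) = -(-I(X)) = I(X)$.

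For the three listed properties, the pattern is identical: apply the corresponding property of $I$ to $-X$ (or to $-X,-Y$) and read off the conclusion after multiplying by $-1$. For (1), $X \le Y$ yields $-Y \le -X$, so $I(-Y) \le I(-X)$ and hence $I^*(X) \le I^*(Y)$. For (2), with $Y_{\cH} \in \eL(\R,\cH)$,
\[
I^*(X + Y_{\cH}) = -I\bigl(-X + (-Y_{\cH})\bigr) = -\bigl(I(-X) - Y_{\cH}\bigr) = I^*(X) + Y_{\cH},
\]
by the $\cH$-translation invariance of $I$. For (3), reading super-linearity as $I(\alpha_{\cH} X + \beta_{\cH} Y) \ge \alpha_{\cH} I(X) + \beta_{\cH} I(Y)$ for $\alpha_{\cH},\beta_{\cH} \in \eL(\R_+,\cH)$, I would apply it to $(-X,-Y)$ and negate:
\[
I^*(\alpha_{\cH} X + \beta_{\cH} Y) = -I\bigl(\alpha_{\cH}(-X) + \beta_{\cH}(-Y)\bigr) \le \alpha_{\cH} I^*(X) + \beta_{\cH} I^*(Y).
\]

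There is essentially no conceptual obstacle; the only point requiring a little care is the arithmetic on $\overline{\R}$. One must check that $-(a-b) = -a + b$ and $-(a+b) = -a - b$ remain valid under the convention $\infty - \infty = 0$ so that, e.g., $-I(-X) + Y_{\cH} = -(I(-X) - Y_{\cH})$ even when $I(-X)$ is infinite. This holds here because $Y_{\cH}$ is real-valued and $\alpha_{\cH},\beta_{\cH}\in \eL(\R_+,\cH)$, which keeps us away from the indeterminate $\infty-\infty$ case; the conventions stated at the start of Section \ref{SecDef} then make every step rigorous.
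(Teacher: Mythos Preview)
Your proof is correct. The paper does not give a proof of this proposition at all---it states the result and moves directly to examples, treating the claims as routine verifications from the definition $I^*(X)=-I(-X)$---so your argument simply supplies (accurately and carefully, including the $\overline{\R}$-arithmetic check) the details the paper leaves to the reader.
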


\begin{ex}
If $I_1(X)=E(X \vert \mathcal{H})$ is defined for  $X \in \mathbb{D}_{I_1}= \mathbb{L}^{1}(\R, \mathcal{F})$, then $I_{1}^*=I_1$. 
 The indicator $I_2(X)=\esssup_{\mathcal{H}}(X  )$,  $X \in \mathbb{D}_{I_2}=\mathbb{L}^{0}(\R, \mathcal{F})$, admits the dual $I_2^*(X)=\essinf_{\mathcal{H}}(X).$ Let $I$ be any  C.I.. Then, $T= \frac{1}{2}I+ \frac{1}{2}I^*$, defined on $ \mathbb{D}_T=\mathbb{D}_I \cap \mathbb{D}_{I^*}$, is self-dual and is still a C.I.. Reciprocally, any self-dual indicator $T$ is of the form $T= \frac{1}{2}I+ \frac{1}{2}I^*$. Indeed, it suffices to choose $I=T$.

\end{ex}

The proof of the following is left to the readers:

\begin{lemm} Consider the lower and upper conditional indicators $I^L=I^{L(E)}$ and $I^U=I^{U(E)}$ with respect to a subset $E$ of the domain of definition  $\eD_I$ of a conditional indicator $I$, as defined in Lemma \ref{LUECI}. Then, we have the following equalities: $(I^{L(E)})^{*}=(I^{*})^{U(-E)}$ and $(I^{U(E)})^{*}=(I^{*})^{L(-E)}$.
\end{lemm}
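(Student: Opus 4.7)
The plan is a straightforward unraveling of definitions together with the substitution $Z=-Y$. Because the lemma only asks for two identities between conditional indicators already built out of $I$, there is no real analytic content: the whole work is bookkeeping with $\operatorname{ess\,sup}_{\cH}$ and $\operatorname{ess\,inf}_{\cH}$ under a sign change.

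First I would fix $X\in \mathbb{D}_{(I^{L(E)})^{*}}=-\mathbb{D}_{I^{L(E)}}=\mathbb{D}_{I^{L(E)}}$'s opposite and expand
\[
(I^{L(E)})^{*}(X)\;=\;-I^{L(E)}(-X)\;=\;-\operatorname{ess\,sup}\nolimits_{\cH}\bigl\{I(Y):Y\in E,\;Y\le -X\bigr\}.
\]
Using the elementary identity $-\operatorname{ess\,sup}_{\cH}\Gamma=\operatorname{ess\,inf}_{\cH}(-\Gamma)$, which follows directly from the definition of $\operatorname{ess\,inf}_{\cH}$ given right after the statement of conditional suprema in the paper, this becomes
\[
(I^{L(E)})^{*}(X)\;=\;\operatorname{ess\,inf}\nolimits_{\cH}\bigl\{-I(Y):Y\in E,\;Y\le -X\bigr\}.
\]

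Next I would perform the substitution $Z:=-Y$, which bijects $E$ with $-E$ and turns the condition $Y\le -X$ into $Z\ge X$. Under this change of variable, $-I(Y)=-I(-Z)=I^{*}(Z)$ by definition of the dual indicator. Therefore
\[
(I^{L(E)})^{*}(X)\;=\;\operatorname{ess\,inf}\nolimits_{\cH}\bigl\{I^{*}(Z):Z\in -E,\;Z\ge X\bigr\}\;=\;(I^{*})^{U(-E)}(X),
\]
which is the first identity. One should briefly remark that the two sides have the same domain: both are defined on $\eL(\overline{\R},\cF)$ by Lemma \ref{LUECI}, once one observes that $\eL(\overline{\R},\cH)\subset -E$ whenever $\eL(\overline{\R},\cH)\subset E$ (which is the standing hypothesis in Lemma \ref{LUECI}), so that the extension $(I^{*})^{U(-E)}$ is legitimately defined.

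The second identity $(I^{U(E)})^{*}=(I^{*})^{L(-E)}$ is obtained by the symmetric argument, swapping the roles of $\operatorname{ess\,sup}_{\cH}$ and $\operatorname{ess\,inf}_{\cH}$ and reversing the inequality. Alternatively, one can deduce it directly from the first by applying it to $I^{*}$ in place of $I$ (so that $E$ is replaced by $-E$) and then using the involution $(I^{*})^{*}=I$ established in the preceding proposition. I do not anticipate any obstacle; the only subtle point is the verification that the domains match after the sign flip, which is immediate from the definitions of $\mathbb{D}_{I^{*}}$ and the extensions.
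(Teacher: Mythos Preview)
Your argument is correct and is exactly the direct computation the paper has in mind; indeed, the paper simply states that the proof ``is left to the readers,'' so there is no alternative approach to compare against. Your handling of the sign change, the substitution $Z=-Y$, and the domain check are all in order.
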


 \begin{theo}\label{linear}
Let  $\mathcal{H}$ be a sub $\sigma$-algebra of $\mathcal{F}$, and let $I$ be a C.I. w.r.t. $\mathcal{H}$, defined on a vector space $\mathbb{D}_I$ of $\eL(\overline{\R}, \cF)$ such that $I^*=I$. Then,  $I$ is additive if and only if $I$ is ${\mathcal{H}}$-linear. 
\end{theo}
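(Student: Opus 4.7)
The plan is to dispatch the easy direction first --- $\cH$-linearity implies additivity by setting $\alpha_\cH = 1$ --- and then focus on the converse: given additivity plus self-duality, produce $\cH$-linearity. The route combines self-duality, the conditional-support bound $\essinf_\cH \le I \le \esssup_\cH$, and Lemma \ref{add} to extract monotonicity and regularity; passes to $\Q$-homogeneity by induction on additivity; upgrades to $\cH$-linearity for simple $\cH$-measurable coefficients; and finally approximates arbitrary $\alpha_\cH \in \eL(\R,\cH)$.

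Several preliminary properties come essentially for free. Additivity gives $I(0)=0$, and $I^*=I$ yields $I(-X) = -I(X)$; together they deliver $I(nX) = nI(X)$ for $n \in \mathbb{Z}$ and hence $I(qX) = qI(X)$ for $q \in \Q$. Monotonicity follows from the C.I. property and the vector-space structure of $\eD_I$: for $X \le Y$ in $\eD_I$, one has $Y-X \in \eD_I$ with $Y-X \ge 0$, so $I(Y) - I(X) = I(Y-X) \ge \essinf_\cH(Y-X) \ge 0$. Assuming $\eD_I$ is $\cH$-decomposable --- which is implicit in the conclusion, since $\cH$-linearity demands $1_H X \in \eD_I$ for every $H \in \cH$ and $X \in \eD_I$ --- Lemma \ref{add} then delivers regularity.

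With regularity and $\Q$-homogeneity, $\cH$-linearity for a simple $\alpha_\cH = \sum_k q_k 1_{H_k}$ (with $q_k \in \Q$ and disjoint $H_k \in \cH$) is immediate: $I(\alpha_\cH X) = \sum_k I(q_k 1_{H_k} X) = \sum_k 1_{H_k} I(q_k X) = \alpha_\cH I(X)$. To reach arbitrary $\alpha_\cH \in \eL(\R, \cH)$, I would first split $\alpha_\cH = \alpha_\cH^+ - \alpha_\cH^-$ and use additivity together with self-duality to reduce to $\alpha_\cH \ge 0$; apply regularity on the $\cH$-partition $\{n \le \alpha_\cH < n+1\}_{n \in \N}$ to reduce further to bounded $\alpha_\cH$; and then sandwich $\alpha_\cH$ between dyadic simple $\cH$-measurable approximations $\alpha_n^- \uparrow \alpha_\cH$ and $\alpha_n^+ \downarrow \alpha_\cH$. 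Writing $I(\alpha_\cH X) = I(\alpha_n^\pm X) + I((\alpha_\cH - \alpha_n^\pm)X)$ by additivity and bounding the residual via $\essinf_\cH \le I \le \esssup_\cH$ produces the squeeze, whence $I(\alpha_\cH X) = \alpha_\cH I(X)$. Additivity finally delivers $I(\alpha_\cH X + Y) = \alpha_\cH I(X) + I(Y)$.

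The main obstacle I anticipate is controlling the residual $I((\alpha_\cH - \alpha_n^-)X)$ when $\esssup_\cH X$ or $\essinf_\cH X$ is infinite: the factor $(\alpha_\cH - \alpha_n^-)$ tends to $0$, but under the convention $0 \times \pm\infty = 0$ the C.I. bound $(\alpha_\cH - \alpha_n^-)\esssup_\cH X$ is either $0$ or $\pm\infty$ and does not approach $0$ cleanly. I would handle this by partitioning $\Omega$ via the $\cH$-sets $\{m \le \essinf_\cH X,\ \esssup_\cH X \le m'\}$ for integers $m \le m'$ (both bounds being $\cH$-measurable), applying regularity piece by piece so that $X$ is essentially bounded on each, and treating the remaining essentially-unbounded pieces directly via the conventions set out at the start of Section \ref{SecDef}.
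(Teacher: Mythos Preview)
Your outline is correct and shares the paper's skeleton: use additivity with self-duality to get regularity (Lemma~\ref{add}) and monotonicity (via positivity, Lemma~\ref{pos}), then pass from integer to rational to real $\cH$-measurable coefficients. The execution differs in two places. First, the paper works directly with $\cH$-measurable integer- and rational-valued coefficients $n_H, r_H$ --- decomposing $I(n_H X)$ over the level sets $\{n_H=k\}$ via regularity and then invoking the measurable-selection Lemmas~\ref{Q} and~\ref{R} --- whereas you go through deterministic rationals and then simple $\cH$-functions; your route is a step longer but avoids measurable selection. Second, and more substantively, for the passage to real $\alpha_\cH$ the paper uses monotonicity to sandwich $r_n I(X)\le I(\alpha X)\le q_n I(X)$ with $r_n\le\alpha\le q_n$ in $\eL(\Q,\cH)$ and lets $n\to\infty$, rather than your additive decomposition $I(\alpha_\cH X)=I(\alpha_n X)+I((\alpha_\cH-\alpha_n)X)$ followed by the $\csupp_\cH$ bound on the residual. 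The monotonicity sandwich is the more economical device here: it needs only $r_n I(X)\to\alpha I(X)$, which holds under the paper's arithmetic conventions, and so sidesteps the unbounded-$\esssup_\cH X$ obstacle you identified and the partitioning workaround on $\{m\le\essinf_\cH X,\ \esssup_\cH X\le m'\}$ that you propose.
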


\begin{proof}
As $I$ is additive, we deduce that  $I$ is regular by lemma \ref{add}, i.e. $I(X 1_{H})=  I(X) 1_H$,
  for all $ X \in \mathbb{D}_I$ and $ H \in \mathcal{H}$. Consider  $ X \in \mathbb{D}_I$ and  $n_H \in \mathbb{L}^{0}(\N, \mathcal{H})$. By additivity, $I(k  X)=kI(X)$, for all $k\in \N$, so that  we have:
     \bean I_{\mathcal{H}}(n_H X)&=& \sum_{k=0}^\infty 1_{\{n_H=k\}} I(n_H X)
     \\ &=& \sum_{k=0}^\infty 1_{\{n_H=k\}} I_{\mathcal{H}}(n_H X 1_{\{n_H=k\}})
     \\&=&\sum_{k=0}^\infty 1_{\{n_H=k\}} I(k  X)
     \\ &=&\sum_{k=0}^\infty 1_{\{n_H=k\}}k I( X)=n_H I( X).
     \eean 
   Also, we get that $I(-n_H X ) = n_H I(-X)= -n_H I^*(X) = -n_H I(X).$ We then deduce that $I(n_H X)=n_H I(X)$ for all $n_H\in \eL(\mathbb{Z}, \mathcal{H})$. Let us consider   $r_H \in \mathbb{L}^{0}(\Q, \mathcal{H})$. By Lemma \ref{Q}, there exists $p_H, q_H  \in \eL(\mathbb{Z}, \mathcal{H}) \times \eL(\N^*, \mathcal{H})$ such that $r_H= p_H/ q_H$. By the first step, we get that
     $$p_H I(X)=I(q_H p_H/q_H X)= q_H I(p_H/q_H X ).$$ 
     Therefore,  $I( r_H X)= r_H I(X).$ Finally, consider  $\alpha \in \mathbb{L}^{0}(\R, \mathcal{H})$. By lemma \ref{R}, there exists two sequences $(r_n)_n , (q_n)_n \in \mathbb{L}^{0}(\Q, \mathcal{H})$ such that $$\alpha-1/n \le r_n \le \alpha \le q_n \le \alpha+1/n . $$
  By the properties above, we deduce that $I$  is increasing. We deduce by the second step that
\bean  &&I(r_n X) \le I(\alpha X) \le I(q_n X),\\
 && r_n I( X) \le I(\alpha X) \le q_n I(X).\eean
 We conclude  that $I( \alpha X)= \alpha I(X)$, as $n\to \infty$.
\end{proof}

\section{Stochastic indicators: tower property and  projection w.r.t. a filtration}\label{SecTP}

Let $(\cF_t)_{t\in [0,T]}$ be a complete filtration,  i.e. a sequence of complete $\sigma$-algebras such that   $\cF_s \subset \cF_t$ for any $s \le t$. Consider   a family $(I_t)_{t\in [0,T]}$ of adapted conditional indicators in the sense that $I_t$ is a conditional indicator w.r.t. $\cF_t$, for every $t\in [0,T]$.  We say that $I=(I_t)_{t\in [0,T]}$ is a stochastic indicator.

\begin{defi}
Consider  a  stochastic indicator $I=(I_t)_{t\in [0,T]}$. We say that $(I_t)_{t\in [0,T]}$ satisfies the tower property if, for any $s \le t$,  $I_t(\eD_{I_s})\subseteq \eD_{I_s} \subseteq \eD_{I_t}$ and
$$ I_s(I_t(X))=I_s(X),\, {\rm for\, all\,} X\in \eD_{I_s} .$$
\end{defi}

\begin{ex} The conditional essential supremum indicator satisfies the tower property. In particular, we have  $$\esssup_{\cF_0}(\esssup_{\cF_t}(X) 1_{F_t}) = \esssup_{\cF_0}(X 1_{F_t}),\, \forall F_t \in \cF_t,\, \forall X\in L^0(\R,\cF).$$
Note that, if a  stochastic indicator $I=(I_t)_{t\in [0,T]}$ satisfies the tower property, so does its dual $I^*=(I_t^*)_{t\in [0,T]}$.
\end{ex}

\begin{defi}
Let $I_0$ be a conditional indicator w.r.t. $\cF_0$ such that $D_{I_0}$ is $\cF_t$-decomposable for every $t\ge 0$. We say that  $Z_t\in \eL(\R,\cF_t)$ satisfies the projection equality at time $t$ w.r.t. $I_0$ if the following condition holds:
\bean  {\rm {\bf Pr}:}\, I_0(X 1_{F_t})=I_0
(Z_t1_{F_t}),\,{\rm for\, all\,} F_t \in \cF_t.\eean

\end{defi}

\begin{prop}\label{uniq1}
 Suppose that the stochastic indicator $I=(I_t)_{t\in [0,T]}$ is such that  $I_0$ is super-additive, $I_t(X)$ satisfies the projection  property at given time $t$ for some $X\in \eD_{I_t}$ and, for every $Y \in \cL^0(\R^+, \cF_T)$, we have $I_0(Y)\le 0$ if and only if $ Y=0$. Then, $I_t(X)$ is the unique  $\cF_t$-measurable random variable satisfying the projection equality  ( {\bf Pr}). 
\end{prop}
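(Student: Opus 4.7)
The plan is to show that any $\cF_t$-measurable candidate $Z_t$ satisfying the projection equality $\mathbf{Pr}$ for $X$ must coincide a.s.\ with $I_t(X)$. The only tools available are: the two projection equalities (for $I_t(X)$ and for the hypothetical $Z_t$), the super-additivity of $I_0$, and the strict positivity condition that $I_0(Y) \le 0 \Rightarrow Y = 0$ on $\cL^0(\R^+,\cF_T)$. The natural strategy is to play these off each other by isolating the $\cF_t$-measurable set where the two candidates differ.

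First, I would record the obvious consequence of the two projection equalities: for every $F_t \in \cF_t$,
$$I_0(I_t(X)\,1_{F_t}) \;=\; I_0(X\,1_{F_t}) \;=\; I_0(Z_t\,1_{F_t}).$$
Then I would fix $F_t := \{I_t(X) > Z_t\} \in \cF_t$ and decompose
$I_t(X)\,1_{F_t} = Z_t\,1_{F_t} + (I_t(X)-Z_t)\,1_{F_t}$. Super-additivity of $I_0$ applied to this sum yields
$$I_0(I_t(X)\,1_{F_t}) \;\ge\; I_0(Z_t\,1_{F_t}) + I_0\bigl((I_t(X)-Z_t)\,1_{F_t}\bigr),$$
and combined with the equality above this forces $I_0\bigl((I_t(X)-Z_t)\,1_{F_t}\bigr) \le 0$. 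The random variable $(I_t(X)-Z_t)\,1_{F_t}$ is nonnegative, so the third hypothesis of the proposition forces it to vanish a.s.; but on $F_t$ it is strictly positive by construction, whence $\mathbb{P}(F_t)=0$, i.e.\ $I_t(X) \le Z_t$ a.s. The symmetric argument with $G_t := \{Z_t > I_t(X)\}$ gives the reverse inequality, and uniqueness follows.

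The only real obstacle is the bookkeeping on domains: super-additivity requires that all three random variables appearing in the decomposition lie in $\eD_{I_0}$. Both $I_t(X)\,1_{F_t}$ and $Z_t\,1_{F_t}$ are in $\eD_{I_0}$ since they appear in the hypothesized projection equalities, and one expects the difference $(I_t(X)-Z_t)\,1_{F_t}$ to be handled by the standing $\cF_t$-decomposability of $\eD_{I_0}$ together with stability under $\cF_t$-measurable translations (property (P2) from Definition \ref{DefiCI}); if some additional regularity of $\eD_{I_0}$ is needed, I would make it explicit when applying super-additivity. Everything else is routine.
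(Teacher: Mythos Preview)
Your proposal is correct and follows essentially the same route as the paper's proof: both isolate an $\cF_t$-measurable set where the two candidates differ, combine the two projection equalities to obtain $I_0(I_t(X)1_{F_t})=I_0(Z_t1_{F_t})$, apply super-additivity of $I_0$ to the decomposition into sum plus nonnegative remainder, and invoke the strict positivity hypothesis to kill that set. The only cosmetic difference is that the paper first treats $F_t=\{Z_t>I_t(X)\}$ while you start with $\{I_t(X)>Z_t\}$; your explicit mention of the domain bookkeeping is a welcome addition that the paper leaves implicit.
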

\begin{proof}
Suppose that there exists $Z_t \in \cL^0(\R, \cF_t)$ such that, for all $F_t \in \cF_t$,
\bea \label{ID2} I_0(X 1_{F_t})=I_0
(Z_t 1_{F_t}).\eea
 Let us show that $Z_t = I_t(X)$. Take $F_t = \{Z_t > I_t(X)\}$. 
By the projection property  ({\bf Pr}) for $I_t(X)$ and (\ref{ID2}), $ I_0(I_t(X) 1_{F_t})=I_0
(Z_t 1_{F_t}).$ It follows by the super-additivity of $I_0$ that   $I_0((Z_t -I_t(X)) 1_{F_t})\le 0$. Since $(Z_t -I_t(X)) 1_{F_t} \ge 0$ then $(Z_t -I_t(X)) 1_{F_t}=0$ by assumption. So $1_{F_t}=0$ and  $Z_t \le I_t(X)$ a.s.. Analogously $Z_t \ge  I_t(X)$ a.s.. The conclusion follows. 
\end{proof}
\begin{prop}
Suppose that the stochastic indicator $I=(I_t)_{t\in [0,T]}$ is such that  $I_0$ is linear, $I_t(X)$ satisfies the projection  property at time $t$ for all $X \in \cL^0(\R, \cF_T) $ and, for all $Y \in \cL^0(\R^+, \cF_T)$, we have $I_0(Y)\le 0$ if and only if $ Y=0$. Then, the following statements hold:
  \begin{enumerate}
\item $I_t$ is $\cF_t$-linear, for all $t\in [0,T]$.
\item $I_t$ is increasing, for all $t\in [0,T]$. 
\item $I_t$ is regular, for all $t\in [0,T]$. 
\item The stochastic indicator $I=(I_t)_{t\in [0,T]}$ satisfies the tower property. 
\end{enumerate}
\end{prop}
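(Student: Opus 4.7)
The plan is to use Proposition \ref{uniq1} as the organizing principle: linearity of $I_0$ entails super-additivity, so the projection equality $(\mathbf{Pr})$ at time $t$ uniquely identifies $I_t(X)$ among $\cF_t$-measurable variables. Consequently, each identity for $I_t$ is established by exhibiting an $\cF_t$-measurable candidate $Z_t$ and checking that $I_0(X 1_{F_t}) = I_0(Z_t 1_{F_t})$ for all $F_t \in \cF_t$.

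I would first prove regularity (point 3). For $X \in \cL^0(\R, \cF_T)$ and $H \in \cF_t$, the variable $I_t(X) 1_H$ is $\cF_t$-measurable, and for any $F_t \in \cF_t$ the set $H \cap F_t$ still lies in $\cF_t$, so projection at $t$ applied to $X$ yields
\[ I_0(X 1_H 1_{F_t}) = I_0(X 1_{H \cap F_t}) = I_0(I_t(X) 1_{H \cap F_t}) = I_0(I_t(X) 1_H 1_{F_t}). \]
Uniqueness then gives $I_t(X 1_H) = I_t(X) 1_H$, and Lemma \ref{1H} concludes.

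Monotonicity (point 2) is obtained by taking $F_t = \{I_t(X) > I_t(Y)\}$ when $X \le Y$: linearity of $I_0$ and projection give
\[ I_0\bigl((I_t(X) - I_t(Y)) 1_{F_t}\bigr) = I_0\bigl((X - Y) 1_{F_t}\bigr) \le 0, \]
the last inequality because $I_0$ is positive (Lemma \ref{pos}) and $(Y - X) 1_{F_t} \ge 0$. Since $(I_t(X) - I_t(Y)) 1_{F_t} \ge 0$, the standing hypothesis on $I_0$ forces this variable to vanish, so $F_t$ is negligible. The same scheme yields additivity: $I_0((X + Y) 1_{F_t}) = I_0(I_t(X) 1_{F_t}) + I_0(I_t(Y) 1_{F_t}) = I_0((I_t(X) + I_t(Y)) 1_{F_t})$, whence $I_t(X + Y) = I_t(X) + I_t(Y)$ by uniqueness.

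From additivity I bootstrap $\cF_t$-homogeneity following Theorem \ref{linear}: $I_t(nX) = n I_t(X)$ for $n \in \mathbb{Z}$ (using $I_t(0) = 0$); the partition $\{n_t = k\}_{k \in \mathbb{Z}}$ combined with regularity extends this to $n_t \in \eL(\mathbb{Z}, \cF_t)$; division transfers it to $\eL(\Q, \cF_t)$; and monotonicity with rational sandwiching lifts it to $\eL(\R, \cF_t)$, completing point (1). The tower property (4) is then essentially free: for $s \le t$ and $F_s \in \cF_s \subseteq \cF_t$, projection at $t$ applied to $X$ gives $I_0(X 1_{F_s}) = I_0(I_t(X) 1_{F_s})$, while projection at $s$ applied to $I_t(X)$ gives $I_0(I_t(X) 1_{F_s}) = I_0(I_s(I_t(X)) 1_{F_s})$; chaining these, $I_s(I_t(X))$ is an $\cF_s$-measurable variable satisfying the projection equality for $X$ at time $s$, so uniqueness forces $I_s(I_t(X)) = I_s(X)$. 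The main obstacle is the last real-coefficient step of homogeneity, which demands passing to the limit through $I_t$ on rational sandwiches using only monotonicity; every other step reduces to routine bookkeeping on top of Proposition \ref{uniq1}.
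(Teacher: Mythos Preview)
Your proof is correct and follows essentially the same approach as the paper: both rest on Proposition \ref{uniq1} to recognise candidates for $I_t(X)$ via the projection equality, and both pass through the bootstrap argument of Theorem \ref{linear} to reach $\cF_t$-linearity. The only difference is organisational: the paper first shows directly that $I_t(\alpha X+Y)=\alpha I_t(X)+I_t(Y)$ for all real $\alpha$ (exploiting the full linearity of $I_0$, not just additivity), then cites Theorem \ref{linear} as a black box, after which monotonicity and regularity fall out of linearity; you instead prove regularity and monotonicity first by separate uniqueness arguments, obtain bare additivity, and then reproduce the proof of Theorem \ref{linear} inline. The paper's route is slightly shorter, but the mathematical content is the same.
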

\begin{proof}
Let us show that  $I_t$ is linear. Consider  $X ,Y \in\cL^0(\R, \cF_T) $, $\alpha \in \R $ and  $F_t \in \cF_t$.
By linearity and the projection  property, we get that
  \bean I_0((\alpha I_t(X)+I_t(Y)) 1_{F_t})&=&  \alpha I_0(I_t(X)  1_{F_t})+I_0(I_t(Y) 1_{F_t})\\
  &=&  \alpha I_0(X  1_{F_t})+I_0(Y 1_{F_t})\\
  &=&I_0((\alpha X+Y) 1_{F_t}).
  \eean 
  By Proposition \ref{uniq1}, we deduce $I_t((\alpha X+Y) )= \alpha I_t(X)+I_t(Y)$. By Theorem \ref{linear}, $I_t$ is then $\cF_t$-linear.   The second statement is an immediate consequence of the first one. The third one   is also direct consequence by Proposition \ref{uniq1}. At last, consider  $X \in\cL^0(\R, \cF_T) $ and let $F_s \in \cF_s$ where $s\le t$. By the projection  property, we have
  $$I_0(I_s(I_t(X)) 1_{F_s})=I_0(I_t(X) 1_{F_s})=
I_0(X1_{F_s}).$$
We conclude by Proposition \ref{uniq1} that $I_s(I_t(X))=I_s(X)$. 
\end{proof}

The conditional expectation $E(X|\cF_t)$, $X\in  \mathbb{L}^{1}(\R_+, \cF_T)$,  is the unique $\cF_t$-measurable random variable such that we have   $E(X1_{F_t})=E(E(X|\cF_t)1_{F_t})$, for all $F_t\in \cF_t$. As soon as a stochastic indicator $I=(I_t)_{t\in [0,T]}$ satisfies the tower property and is such that $I_t$ is $\cF_t$-regular, for all $t\le T$, we have 
 $I_0(X1_{F_t})=I_0(I_t(X1_{F_t}))=I_0(I_t(X)1_{F_t})$, for all $F_t\in \cF_t$. The natural question is whether $I_t(X)$ is the unique $\cF_t$-measurable random variable satisfying the  projection  property. Below, we study the case of the stochastic essential supremum indicator.

\begin{theo} \label{uniq}
Let $X \in \mathbb{L}^{0}(\R_+, \cF_T)$ such that $\esssup_{\cF_0}(X)\in \R $, i.e. $X$ is bounded. There exists a unique $Z_t \in \mathbb{L}^{\infty}(\R_+, \cF_t)$  that satisfies the projection property 
$$\esssup_{\cF_0}(Z_t 1_{F_t}) = \esssup_{\cF_0}(X 1_{F_t}), \forall F_t \in \cF_t.$$
\end{theo}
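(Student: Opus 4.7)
Proof plan.

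I propose the candidate $Z_t := \esssup_{\cF_t}(X)$. Since $\cF_0 \subseteq \cF_t$ and $\esssup_{\cF_0}(X) \in \R$, we have $0 \leq Z_t \leq \esssup_{\cF_0}(X) < \infty$ a.s., so $Z_t \in \mathbb{L}^{\infty}(\R_+, \cF_t)$. The tower property of the conditional essential supremum, recalled in the example of Section \ref{SecTP}, yields the projection equality
\[
\esssup_{\cF_0}\bigl(\esssup_{\cF_t}(X)\, 1_{F_t}\bigr) = \esssup_{\cF_0}(X\, 1_{F_t}), \qquad F_t \in \cF_t,
\]
which settles existence.

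For uniqueness, suppose $\tilde Z_t \in \mathbb{L}^{\infty}(\R_+, \cF_t)$ also satisfies the projection equality. Together with the candidate $Z_t$, this gives $\esssup_{\cF_0}(Z_t 1_{F_t}) = \esssup_{\cF_0}(\tilde Z_t 1_{F_t})$ for every $F_t \in \cF_t$. I will establish $Z_t \leq \tilde Z_t$ a.s.; the reverse inequality is symmetric. Assume for contradiction that $\mathbb{P}(Z_t > \tilde Z_t) > 0$; by countable union, pick $\epsilon > 0$ such that $F_\epsilon := \{Z_t \geq \tilde Z_t + \epsilon\} \in \cF_t$ has positive probability. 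Since $Z_t 1_{F_\epsilon} \geq (\tilde Z_t + \epsilon) 1_{F_\epsilon}$ and $\esssup_{\cF_0}$ is monotone,
\[
\esssup_{\cF_0}(Z_t 1_{F_\epsilon}) \geq \esssup_{\cF_0}\bigl((\tilde Z_t + \epsilon) 1_{F_\epsilon}\bigr).
\]

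The main technical step is the auxiliary identity
\[
\esssup_{\cF_0}\bigl((\tilde Z_t + \epsilon) 1_{F_\epsilon}\bigr) = \esssup_{\cF_0}(\tilde Z_t 1_{F_\epsilon}) + \epsilon\, \esssup_{\cF_0}(1_{F_\epsilon}).
\]
The $\cF_0$-measurable set $A_0 := \{\esssup_{\cF_0}(1_{F_\epsilon}) = 1\}$ is the smallest $\cF_0$-cover of $F_\epsilon$ modulo null sets, so $\mathbb{P}(A_0) \geq \mathbb{P}(F_\epsilon) > 0$. Combining the identity with the projection equality forces $\epsilon\, 1_{A_0} \leq 0$ a.s., contradicting $\mathbb{P}(A_0) > 0$. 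The hardest part is the $\geq$ direction of the auxiliary identity, as the general super-additivity of $\esssup_{\cF_0}$ fails. It can be obtained by passing to a regular version of the conditional probability $\mu_\omega := \mathbb{P}(\cdot \mid \cF_0)(\omega)$: on $A_0$ one has $\mu_\omega(F_\epsilon) > 0$, and the ordinary essential supremum under $\mu_\omega$ of $(\tilde Z_t + \epsilon) 1_{F_\epsilon}$ equals exactly $\epsilon + \esssup_{\mu_\omega}(\tilde Z_t 1_{F_\epsilon})$, because both $\tilde Z_t 1_{F_\epsilon}$ and $\epsilon 1_{F_\epsilon}$ are supported on the positive-$\mu_\omega$-mass set $F_\epsilon$, so the $\epsilon$-translation can be factored out. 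The $\leq$ direction is immediate from sub-additivity together with $\esssup_{\cF_0}(\epsilon 1_{F_\epsilon}) = \epsilon\, \esssup_{\cF_0}(1_{F_\epsilon})$.
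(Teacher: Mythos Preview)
Your overall strategy---existence via $Z_t=\esssup_{\cF_t}(X)$ and uniqueness via an $\varepsilon$-level-set argument---matches the paper's proof. The paper also takes $F_t^{\varepsilon}=\{Z_t\le X_t-\varepsilon\}$ (with the roles of the two candidates swapped), sandwiches to obtain $\esssup_{\cF_0}((X_t-\varepsilon)1_{F_t^{\varepsilon}})=\esssup_{\cF_0}(X_t 1_{F_t^{\varepsilon}})$, and then invokes a separate elementary lemma (Corollary~\ref{coro}/Lemma~\ref{EssupProj}) to conclude $\varepsilon=0$. So the skeletons agree; the difference is in how you justify the ``auxiliary identity'', which is the exact analogue of the paper's Corollary~\ref{coro}.

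The one genuine issue is your appeal to a \emph{regular} version of $\mathbb{P}(\cdot\mid\cF_0)$ together with the (unstated and nontrivial) identification $\esssup_{\cF_0}(Y)(\omega)=\esssup_{\mu_\omega}(Y)$. The paper works on a general complete probability space and never assumes a standard Borel structure, so existence of a regular conditional probability is not available. Your auxiliary identity is correct, but this route imports a hypothesis the theorem does not have, and the pointwise identification with conditional esssup would itself need a proof. In contrast, the paper's Lemma~\ref{EssupProj} obtains the same conclusion by purely order-theoretic manipulations with $\esssup_{\cF_0}$.

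You can repair your argument without regular conditional probabilities, staying closer to the paper. With $A_0=\{\esssup_{\cF_0}(1_{F_\varepsilon})=1\}$ and $\beta:=\esssup_{\cF_0}\bigl((\tilde Z_t+\varepsilon)1_{F_\varepsilon}\bigr)$, first note $\beta\ge\esssup_{\cF_0}(\varepsilon 1_{F_\varepsilon})=\varepsilon 1_{A_0}$. Since $F_\varepsilon\subseteq A_0$ a.s. (because $1_{F_\varepsilon}\le\esssup_{\cF_0}(1_{F_\varepsilon})=1_{A_0}$) and $\beta\ge(\tilde Z_t+\varepsilon)1_{F_\varepsilon}$, on $A_0$ one has $\beta-\varepsilon\ge \tilde Z_t 1_{F_\varepsilon}$ (the inequality on $A_0\cap F_\varepsilon^c$ uses $\beta\ge\varepsilon$ and $\tilde Z_t 1_{F_\varepsilon}=0$). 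Hence the $\cF_0$-measurable variable $(\beta-\varepsilon)1_{A_0}$ dominates $\tilde Z_t 1_{F_\varepsilon}$, giving $\beta\ge\esssup_{\cF_0}(\tilde Z_t 1_{F_\varepsilon})+\varepsilon 1_{A_0}$. Combined with your sub-additivity bound this yields the identity, and the rest of your argument goes through exactly as written.
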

\begin{proof}  Observe that that $Z_t=ess \sup_{{\cal F}_t}(X)$ exists and is in $\mathbb{L}^\infty(\R_+,{\cal F}_t)$ and satisfies the projection property of the theorem. It remains to prove the uniqueness. So consider another possible 
  $X_t \in \mathbb{L}^{\infty}(\R_+, \cF_t)$ such that 
$$\esssup_{\cF_0}(X_t 1_{F_t}) = \esssup_{\cF_0}(X 1_{F_t}), \forall F_t \in \cF_t.$$

Let us show that $X_t= Z_t$. Consider,  for any $ \e > 0$,
 the set $F_t^{\e}=\{ Z_t \le X_t - \e \}\in \cF_t.$ Then, $X 1_{F_t^{\e}} \le (X_t - \e)1_{F_t^{\e}}$. Moreover, by assumption, we have
 \bean \esssup_{\cF_0}(X_t 1_{F_t^{\e}})=  \esssup_{\cF_0}(X 1_{F_t^{\e}}) 
  \le \esssup_{\cF_0}((X_t - \e)1_{F_t^{\e}}) \le \esssup_{\cF_0}(X_t 1_{F_t^{\e}}).\eean 
Therefore, we have:
$$\esssup_{\cF_0}((X_t - \e)1_{F_t^{\e}}) = \esssup_{\cF_0}(X_t 1_{F_t^{\e}}).$$
 Suppose that $\mathbb{P}(F_t^{\e}) >0$.
We claim that $\esssup_{\cF_0}(X_t 1_{F_t^{\e}})> 0$. Otherwise, $X_t 1_{F_t^{\e}}\le 0$   and so $  X \le -\e$ on $F_t^{\e}$ in contradiction with  $X \ge 0$ a.s.. So, we have  $0< \esssup_{\cF_0}(X_t 1_{F_t^{\e}}) \le \esssup_{\cF_0}(X_t ) \in \R_+$. By Corollary \ref{coro}, $\e=0$ in contradiction with the assumption that $\epsilon>0$. Therefore, $\mathbb{P}(F_t^{\e})=0$ hence $\mathbb{P}(F_t^{1/n})=0$,  for any $n \ge 1$.  We deduce that   $\mathbb{P}(\bigcap_{n \ge 1}  (\Omega \setminus F_t^{1/n}))=1$, which means that, a.s., $X_t-1/n < Z_t$ for any $n \ge 1$. As $n\to \infty$, we get that $X_t \le  Z_t .$ Now consider the sets  $G_t^{\e}=\{ X_t \le Z_t - \e \}.$ Similarly, we obtain that $$\esssup_{\cF_0}((Z_t - \e)1_{G_t^{\e}}) = \esssup_{\cF_0}(Z_t 1_{G_t^{\e}}).$$
 As $\esssup_{\cF_0}(Z_t)\ne 0$ if $\mathbb{P}(G_t^{\e}) >0$, we apply again  Corollary \ref{coro}  and deduce  that $\e =0$, i.e. a contradiction. We deduce that  $Z_t \le  X_t$ a.s. and the conclusion follows.
\end{proof}
\begin{coro}
Let $X \in \mathbb{L}^{\infty}(\R, \cF_T)$ such that $X>0$ a.s.. Then, there exists a unique $Z_t \in \mathbb{L}^{0}(\R, \cF_t)$ such that $\esssup_{\cF_0}(Z_t)\in \R $ that satisfies the projection property
$$\esssup_{\cF_0}(Z_t 1_{F_t}) = \esssup_{\cF_0}(X 1_{F_t}), \forall F_t \in \cF_t.$$
\end{coro}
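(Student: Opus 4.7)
The plan is to reduce the corollary to Theorem \ref{uniq}. For existence, take $Z_t := \esssup_{\cF_t}(X)$. Since $0 < X \le \|X\|_\infty$ a.s., we have $Z_t \in \mathbb{L}^\infty(\R_+,\cF_t) \subset \mathbb{L}^0(\R,\cF_t)$ and $\esssup_{\cF_0}(Z_t) \le \|X\|_\infty \in \R$. The existence part of Theorem \ref{uniq} (the first sentence of its proof) directly yields the projection property for this $Z_t$.

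For uniqueness, take any $Z_t' \in \mathbb{L}^0(\R,\cF_t)$ with $\esssup_{\cF_0}(Z_t') \in \R$ satisfying the projection property. The strategy is to show $Z_t' \ge 0$ a.s.\ and that $Z_t'$ is essentially bounded, and then invoke the uniqueness part of Theorem \ref{uniq}. First, suppose towards contradiction that $F_t := \{Z_t' < 0\} \in \cF_t$ has positive probability. Then $Z_t' 1_{F_t} \le 0$ a.s., so $\esssup_{\cF_0}(Z_t' 1_{F_t}) \le 0$. Conversely, $X 1_{F_t} \ge 0$ with $X 1_{F_t} > 0$ on $F_t$, a set of positive probability; hence $\esssup_{\cF_0}(X 1_{F_t})$ cannot be a.s.\ zero (otherwise $X 1_{F_t} = 0$ a.s., contradicting $X > 0$ a.s.\ on $F_t$). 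This gives $\esssup_{\cF_0}(Z_t' 1_{F_t}) \ne \esssup_{\cF_0}(X 1_{F_t})$, contradicting the projection property. Thus $Z_t' \ge 0$ a.s.

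Next, applying the projection property with $F_t = \Omega$ gives $\esssup_{\cF_0}(Z_t') = \esssup_{\cF_0}(X) \le \|X\|_\infty$. Since $Z_t'$ is $\cF_t$-measurable and a.s.\ dominated by its own $\cF_0$-essential supremum, we conclude $0 \le Z_t' \le \|X\|_\infty$ a.s., i.e.\ $Z_t' \in \mathbb{L}^\infty(\R_+,\cF_t)$. Both $Z_t$ and $Z_t'$ now satisfy the hypotheses of Theorem \ref{uniq}, and uniqueness there forces $Z_t' = Z_t$.

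The main obstacle is Step 1: turning the strict positivity $X > 0$ a.s.\ (as opposed to merely $X \ge 0$) into the nonnegativity of any candidate $Z_t'$. The inequality $X 1_{F_t} > 0$ on $F_t$ is exactly what rules out $\esssup_{\cF_0}(X 1_{F_t}) \le 0$, and this is the one place where the strengthened positivity assumption (relative to Theorem \ref{uniq}, where $X \ge 0$ sufficed because the candidate was already known to be nonnegative) is essential.
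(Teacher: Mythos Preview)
Your proof is correct and follows essentially the same route as the paper: show that any candidate $Z_t'$ must be nonnegative by testing the projection identity on the set where it fails, then invoke Theorem~\ref{uniq}. The paper uses $F_t=\{Z_t\le 0\}$ rather than your $\{Z_t'<0\}$ and phrases the contradiction as ``$\esssup_{\cF_0}(X1_{F_t})\le 0 \Rightarrow X1_{F_t}\le 0 \Rightarrow \mathbb{P}(F_t)=0$'' directly, whereas you argue that equality would force $\esssup_{\cF_0}(X1_{F_t})=0$; these are equivalent. You also spell out the boundedness of $Z_t'$ from $\esssup_{\cF_0}(Z_t')\in\R$, which the paper leaves implicit when it says ``the conclusion follows by Theorem~\ref{uniq}.''
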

\begin{proof}
 Suppose that there exists  $Z_t \in \mathbb{L}^{0}(\R, \cF_t)$ such that 
$$\esssup_{\cF_0}(Z_t 1_{F_t}) = \esssup_{\cF_0}(X 1_{F_t}), \forall F_t \in \cF_t.$$

Let us show that $Z_t \ge 0$.
Consider  $F_t=\{ 0 \ge Z_t \}.$ Then, we have $\esssup_{\cF_0}(X 1_{F_t}) = \esssup_{\cF_0}(Z_t 1_{F_t})\le 0$. Therefore, $X 1_{F_t} \le 0$ on $F_t$ hence $\mathbb{P}(F_t)=0$. The conclusion follows by Theorem \ref{uniq}.
\end{proof}

\begin{coro} 
Consider $X \in \mathbb{L}^{\infty}(\R_-, \cF_T)$ (resp. s.t. $X<0$). There exists a unique $Z_t \in \mathbb{L}^{\infty}(\R_-, \cF_t)$ (resp. $Z_t \in \mathbb{L}^{0}(\R_-, \cF_t)$ bounded from below)  that satisfies the projection property
$$\essinf_{\cF_0}(Z_t 1_{F_t}) = \essinf_{\cF_0}(X 1_{F_t}), \forall F_t \in \cF_t.$$  
\end{coro}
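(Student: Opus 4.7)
The plan is to deduce this corollary from Theorem~\ref{uniq} and the immediately preceding Corollary by pure duality, using the involution $V\mapsto -V$ together with the identity $\essinf_{\cF_0}(V)=-\esssup_{\cF_0}(-V)$ recalled at the beginning of Section~\ref{SecDef}. That identity turns the projection equality
$$
\essinf_{\cF_0}(Z_t 1_{F_t})=\essinf_{\cF_0}(X 1_{F_t}),\quad \forall F_t\in \cF_t,
$$
into the equivalent $\esssup$-projection equality
$$
\esssup_{\cF_0}((-Z_t)1_{F_t})=\esssup_{\cF_0}((-X)1_{F_t}),\quad \forall F_t\in \cF_t,
$$
so the whole question transfers through negation.

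For the first (bounded) case, I would set $Y:=-X\in \mathbb{L}^{\infty}(\R_+,\cF_T)$ and apply Theorem~\ref{uniq} to obtain a unique $W_t\in \mathbb{L}^{\infty}(\R_+,\cF_t)$ satisfying the $\esssup$-projection equality for $Y$. The candidate is then $Z_t:=-W_t\in \mathbb{L}^{\infty}(\R_-,\cF_t)$, which solves the required $\essinf$-projection equality by the equivalence above. Since $V\mapsto -V$ is a bijection between $\mathbb{L}^{\infty}(\R_+,\cF_t)$ and $\mathbb{L}^{\infty}(\R_-,\cF_t)$, uniqueness of $W_t$ in Theorem~\ref{uniq} transfers immediately to uniqueness of $Z_t$ in $\mathbb{L}^{\infty}(\R_-,\cF_t)$: any other admissible $\widetilde Z_t$ would yield $-\widetilde Z_t\in \mathbb{L}^{\infty}(\R_+,\cF_t)$ satisfying the $\esssup$-projection for $Y$, forcing $-\widetilde Z_t=W_t$.

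For the \emph{resp.}\ case ($X$ bounded with $X<0$ a.s.), I would take $Y:=-X$, which is bounded and strictly positive, and instead apply the preceding Corollary. It provides a unique $W_t\in \mathbb{L}^{0}(\R,\cF_t)$ with $\esssup_{\cF_0}(W_t)\in\R$ satisfying the $\esssup$-projection equality against $Y$ (and, as shown in the proof of that Corollary, necessarily $W_t\ge 0$). Setting $Z_t:=-W_t\in \mathbb{L}^{0}(\R_-,\cF_t)$, one obtains $\essinf_{\cF_0}(Z_t)=-\esssup_{\cF_0}(W_t)\in\R$, i.e.\ $Z_t$ is bounded from below, and the same bijection argument transfers uniqueness to this class.

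No genuine obstacle is expected: the statement is a mechanical translation of the $\esssup$-cases via the involution $V\mapsto -V$. The only checks are the algebraic equivalence of the two projection equalities and that the involution maps the admissible class of $Z_t$'s bijectively onto the admissible class in the corresponding positive setting, both of which are immediate from the definitions.
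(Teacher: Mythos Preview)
Your proposal is correct and is exactly the argument the paper intends: the corollary is stated without proof precisely because it follows from Theorem~\ref{uniq} and the preceding corollary via the involution $V\mapsto -V$ and the identity $\essinf_{\cF_0}(V)=-\esssup_{\cF_0}(-V)$. Your check that the admissible classes for $Z_t$ correspond bijectively under negation (in particular, ``bounded from below'' matching ``$\esssup_{\cF_0}(\cdot)\in\R$'') is the only point requiring care, and you handle it correctly.
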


The following counter-example shows that uniqueness does not hold in general for the essential supremum indicator.

\begin{ex}
Consider $\Omega= \R$, $\cF= \cB(\R)$ and  $\mathbb{P}$ the probability mesure defined by its density $ d\mathbb{P}/dx= \alpha/(1+ x^2 )$, $\alpha>0$, w.r.t.  the Lebesgue measure $dx$. 
We consider $X=0$  and we define $Z(\omega)=-exp(w)$ for all $\omega \in \Omega$. \smallskip

We claim  that $0=\esssup_{\cF_0}(Z)$. First,  as  $Z \le 0$, $\esssup_{\cF_0}(Z) \le 0$.  Secondly, as $\lim_{\omega \rightarrow -\infty} Z(\omega)=0$, for any $ \alpha <0$, there exists $ x \in \R$ such that for any $ \omega \le x $, $ 0 \ge Z(\omega) \ge \alpha$. So  $0 \ge \esssup_{\cF_0}(Z) \ge \alpha$ for any $\alpha \in \R^-$. Therefore, $\esssup_{\cF_0}(Z) =0$. \smallskip

 Let $A= \{ Z \le -1 \}$ and   $\cF_1 = \sigma (1_A ) $, i.e. $ \cF_1 = \{ A, A^c, \Omega, \emptyset \}$. 
Let us introduce $Z_1=\esssup_{\cF_1}(Z)$. Note that $Z_1 1_A= \esssup_{\cF_1}(Z 1_A) =-1_A$ and $Z_1 1_{A^c}= \esssup_{\cF_1}(Z 1_{A^c}) =0$, i.e.   $Z_1 =-1_A$.  \smallskip

Let us now consider any $F_1 \in \cF_1$. Of course, $\esssup_{\cF_0}(X 1_{F_1}) =0$. On the other hand, we have by the tower property and by sub-additivity: \bean 0=\esssup_{\cF_0}(Z)&=&\esssup_{\cF_0}(Z_1)\\
 &\le& \esssup_{\cF_0}(Z_1 1_{F_1}) + \esssup_{\cF_0}(Z_1 1_{F_1^c}) \le 0. \eean
 We deduce that $\esssup_{\cF_0}(Z_1 1_{F_1})= \esssup_{\cF_0}(Z_1 1_{F_1^c}) = 0$.  Therefore,
  $$\esssup_{\cF_0}(Z_1 1_{F_1}) =0= \esssup_{\cF_0}(X 1_{F_1}),\, \forall F_1 \in \cF_1.$$
 
 This means that the projection property for $X=0$ is both satisfied by $\esssup_{\cF_0}(X)=0$ and $Z_1$, i.e. uniqueness does not hold. Note that, with  $\cF_2 = \cB(\R)$ and  $Z_2=\esssup_{\cF_2}(Z)=Z$, we have, for any   $F_2 \in \cF_2$,
 $\esssup_{\cF_0}(X 1_{F_2}) =0= \esssup_{\cF_0}(Z_2 1_{F_2})$. However, $Z_2=Z \neq 0$.  
\end{ex}

\section{Risk measures derived from  conditional indicators}\label{SecRM}

Let $I$ be a C.I. w.r.t. a $\sigma$-algebra $\cH$. We define the positive elements of $I$ as the set 
$$\eD_I^+:=\{X\in \eD_I:~I(X)\ge 0\}.$$
In the setting of risk measures in finance, the elements of $\eD_I^+$ are interpreted as the acceptable financial positions. We then define:
$$\cM_I(X):=(\eD_I^+-X)\cap \eL(\R,\cH),\quad X\in \eL(\overline{\R},\cF).$$
Note that $\cM_I(X)$ may be empty and we have $\cM_I(X+\alpha_{\cH})=\cM_I(X)-\alpha_{\cH}$ for all $\alpha_{\cH}\in \eL(\R,\cH)$. Moreover, $Y_{\cH}\in \cM_I(X)$ if and only if  $Y_{\cH}+X$ is acceptable. We then define:

\bea \label{RMDef} \rho_I(X):=\essinf \cM_I(X),\,X \in \eL(\R,\cF),\eea
with the convention $\essinf \emptyset=+\infty$. Here, we use the usual notation $\essinf \Gamma$ without mentioning the $\sigma$-algebra when this one is shared with the elements of  $\Gamma$, i.e. $\cH=\cF$ in the definition.  We denote by ${\rm Dom\,} \rho_I $ the set of all $X  \in \eL(\R,\cF)$, such that $\cM_I(X) \neq \emptyset $.  \smallskip

The proof of the following lemma, being simple, is left to the readers.

\begin{lemm} \label{DOM}
Let  $I$ be  a conditional operator w.r.t. $\cH$. We have the following properties: 
\begin{enumerate}
\item If $I$ is $\cH$-positively homogeneous, then $\alpha_{\cH} X \in  {\rm Dom\,} \rho_I $ for every $X$ in ${\rm Dom\,} \rho_I $ and $\alpha_{\cH} \in  \cL^0 (\R_+ , \cH )$. 
\item If $I$ is super-additive, then $X_1 + X_2 \in{\rm Dom\,} \rho_I $ for any $X_1, X_2 \in {\rm Dom\,} \rho_I $. 
\item If $I$ is non decreasing, then   $X_1 \in {\rm Dom\,} \rho_I $ for any $X _1 \ge X_2 $ such that $ X_2 \in {\rm Dom\,} \rho_I $.
\item If $\eD_I^+$ is $\cH$-convex (e.g. if $I$ is $\cH$-convex) then ${\rm Dom\,} \rho_I$  is $\cH$-convex. 

\end{enumerate}
\end{lemm}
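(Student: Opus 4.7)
The strategy is identical across the four statements: the condition $X \in {\rm Dom\,}\rho_I$ unpacks to the existence of some $Y_\cH \in \eL(\R,\cH)$ such that $X+Y_\cH \in \eD_I$ and $I(X+Y_\cH)\ge 0$, so in each case one picks witnesses $Y_i \in \cM_I(X_i)$ for the hypothesis and constructs an explicit $\cH$-measurable witness in $\cM_I$ certifying that the candidate element lies in ${\rm Dom\,}\rho_I$.

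For (1), take $Y_\cH \in \cM_I(X)$. By $\cH$-positive-homogeneity, $\alpha_\cH(X+Y_\cH) \in \eD_I$ and $I(\alpha_\cH(X+Y_\cH)) = \alpha_\cH I(X+Y_\cH) \ge 0$ since $\alpha_\cH \ge 0$; the cushion for $\alpha_\cH X$ is then $\alpha_\cH Y_\cH$. For (2), take $Y_i \in \cM_I(X_i)$; super-additivity yields $I((X_1+Y_1)+(X_2+Y_2)) \ge I(X_1+Y_1)+I(X_2+Y_2)\ge 0$, so $Y_1+Y_2$ witnesses $X_1+X_2 \in {\rm Dom\,}\rho_I$. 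For (3), given $Y_\cH \in \cM_I(X_2)$, we have $X_1+Y_\cH \ge X_2+Y_\cH$, whence monotonicity of $I$ gives $I(X_1+Y_\cH) \ge I(X_2+Y_\cH) \ge 0$, and the same $Y_\cH$ witnesses $X_1 \in {\rm Dom\,}\rho_I$. For (4), given $Y_i \in \cM_I(X_i)$ and $\alpha_\cH \in \eL([0,1],\cH)$, the $\cH$-convexity of $\eD_I^+$ applied to $X_i+Y_i \in \eD_I^+$ gives
\[
\alpha_\cH(X_1+Y_1)+(1-\alpha_\cH)(X_2+Y_2) \;\in\; \eD_I^+,
\]
and rearranging exhibits $\alpha_\cH Y_1+(1-\alpha_\cH)Y_2 \in \eL(\R,\cH)$ as a witness for $\alpha_\cH X_1+(1-\alpha_\cH)X_2$.

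The only real obstacle is a bookkeeping one: in each case, one must check that the sums and products to which the relevant property of $I$ is applied actually lie in $\eD_I$. This is essentially automatic because each hypothesis on $I$ ($\cH$-positive-homogeneity, super-additivity, monotonicity, $\cH$-convexity) carries the matching stability requirement on $\eD_I$ (e.g.\ $\alpha_\cH\eD_I \subset \eD_I$ and $\alpha_\cH\eD_I+(1-\alpha_\cH)\eD_I\subset\eD_I$ are built into the very definitions), and these combine with property (P2), $\eD_I + \eL(\overline{\R},\cH) \subseteq \eD_I$, to close each chain of reasoning. This is why the authors can fairly describe the proof as routine.
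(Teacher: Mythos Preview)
Your proposal is correct and is exactly the routine verification the authors have in mind when they write that the proof, ``being simple, is left to the readers.'' Each of your four items unpacks ${\rm Dom\,}\rho_I$ via a witness $Y_{\cH}\in\cM_I(X)$ and then exhibits the natural new witness ($\alpha_{\cH}Y_{\cH}$, $Y_1+Y_2$, the same $Y_{\cH}$, and $\alpha_{\cH}Y_1+(1-\alpha_{\cH})Y_2$ respectively), which is precisely the intended argument; your closing remark on the domain bookkeeping is a welcome clarification of why the steps are admissible.
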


We now recall the definition of a risk measure, see \cite{FP} for example.

\begin{defi}

Let $\eD$ be a subset of $\eL(\R, \cF)$ containing $0$ and such that $\eD+ \eL(\R, \cH)\subset \eD$.
We say that a mapping
\bean
\rho_{\cH}: \eD & \longrightarrow & \eL(\overline{\R}, \cH).\\
 X & \longmapsto & \rho_{\cH}(X) 
\eean
is an $\cH$-conditional risk measure  if the following properties hold:
\begin{itemize}
\item[(P1)] Normalization: $\rho_{\cH}(0)=0$. 
  \smallskip
\item[(P2)]  Monotonicity: $\rho_{\cH} (X_1) \le \rho_{\cH}(X_2)$, for any $X_1, X_2 \in \eD$ such that we have $X_1 \ge X_2$.
 
\item[(P3)] Cash invariance: for all $ X \in \eD$ and $\alpha_{\cH} \in \eL(\R,\cH)$, we have the equality  $\rho_{\cH}(X+ \alpha_{\cH}) = \rho_{\cH}(X) - \alpha_{\cH} $.
\end{itemize}

\end{defi}

Recall that we say that  $\eD\subseteq L^0(\R,\cF)$ is $\cH$-convex if, for all $ X_1,X_2 \in \eD$ and $\alpha_{\cH} \in \eL([0,1],\cH)$, we have $\alpha_{\cH} X_1 + (1-\alpha_{\cH}) X_2 \in \eD$. When $\alpha_{\cH} X \in \eD$ for all  $ X\in \eD$ and $\alpha_{\cH} \in \eL(\R_+,\cH)$, we say that $\eD$ is positively homogenous. 

 \begin{defi}
 An $\cH$-conditional risk measure $\rho$ on $\eD_{\rho}$ is said:
 \begin{enumerate}
 
 \item conditionally convex if $\eD_{\rho}$ is $\cH$-convex and, for all $ X_1,X_2 \in \eD_{\rho}$ and $\alpha_{\cH} \in \eL([0,1],\cH)$, we have: $$\rho(\alpha_{\cH} X_1 + (1-\alpha_{\cH}) X_2)\le \alpha_{\cH} \rho( X_1) + (1-\alpha_{\cH})\rho( X_2).$$   
   
 \item conditionally positively homogeneous if $\eD$ is positively homogenous and,  for all $ X \in \eD_{\rho}$ and $\alpha_{\cH} \in \eL(\R_+,\cH)$, $ \rho( \alpha_{\cH} X) =  \alpha_{\cH} \rho( X)$.   
  \end{enumerate}

  A conditional convex risk measure which is positively homogeneous is called a conditional coherent risk measure.
 \end{defi} 

The proof of the following lemma is standard:

\begin{prop}\label{RM}
Let  $I$ be  a non decreasing conditional operator. Consider  the mapping $\rho_I$ defined by (\ref{RMDef}) and the associated domain ${\rm Dom}\, \rho_I$. We have  the following properties:
\begin{enumerate}

\item The mapping $\rho_I$ is a conditional risk measure on ${\rm Dom} \rho_I$.

\item If $\eD_I^+$ is $\cH$-convex (for example if $I$ is $\cH$-convex), then $\rho_I$  is $\cH$-convex. 
\item If $I$ is $\cH$-positively homogeneous, then $\rho_I$ is  $\cH$-positively homogeneous. 
\item If $I$ is super-additive, then $\rho_I$ is sub-additive. 
\end{enumerate}
\end{prop}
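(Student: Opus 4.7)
The plan is to verify the four properties in turn by tracking how $\cM_I$ behaves under the relevant operations, and then to push these set inclusions through the essential infimum that defines $\rho_I$.

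For item (1), normalization amounts to showing $\cM_I(0)=\eL(\R_+,\cH)$: since $0\in\eD_I$ and $I(0)\in\csupp_\cH(0)=\{0\}$, we have $0\in\eD_I^+$, and for any $\cH$-measurable $Y_\cH\in\cM_I(0)$ the idempotence $I(Y_\cH)=Y_\cH\ge 0$ forces $Y_\cH\ge 0$; the essential infimum of $\eL(\R_+,\cH)$ is $0$. Cash invariance is immediate from the identity $\cM_I(X+\alpha_\cH)=\cM_I(X)-\alpha_\cH$ that was recorded just before equation \eqref{RMDef}. For monotonicity, given $X_2\le X_1$ with $X_2\in{\rm Dom\,}\rho_I$, I would take any $Y_\cH\in\cM_I(X_2)$ and note that $Y_\cH+X_1\ge Y_\cH+X_2\in\eD_I^+$, so that the assumed monotonicity of $I$ yields $I(Y_\cH+X_1)\ge I(Y_\cH+X_2)\ge 0$, hence $Y_\cH\in\cM_I(X_1)$; the inclusion $\cM_I(X_2)\subseteq\cM_I(X_1)$ gives $\rho_I(X_1)\le\rho_I(X_2)$.

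For item (4), super-additivity of $I$ immediately yields $\eD_I^+ + \eD_I^+\subseteq\eD_I^+$, so $\cM_I(X_1)+\cM_I(X_2)\subseteq\cM_I(X_1+X_2)$; for any $Y_i\in\cM_I(X_i)$ we thus have $\rho_I(X_1+X_2)\le Y_1+Y_2$, and taking essential infima over $Y_1,Y_2$ separately gives $\rho_I(X_1+X_2)\le\rho_I(X_1)+\rho_I(X_2)$. Item (3) rests on the bijection $Y\mapsto\alpha_\cH Y$ between $\cM_I(X)$ and $\cM_I(\alpha_\cH X)$ on the set $\{\alpha_\cH>0\}$: $\cH$-positive homogeneity of $I$ gives $I(\alpha_\cH(Y+X))=\alpha_\cH I(Y+X)$, so $Y+X\in\eD_I^+\Leftrightarrow\alpha_\cH(Y+X)\in\eD_I^+$ wherever $\alpha_\cH>0$, and on $\{\alpha_\cH=0\}$ both $\alpha_\cH X=0$ and $\alpha_\cH\rho_I(X)=0$ by the conventions $0\times\pm\infty=0$ combined with the normalization $\rho_I(0)=0$. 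Item (2) is analogous: for $Y_i\in\cM_I(X_i)$ and $\alpha_\cH\in\eL([0,1],\cH)$, $\cH$-convexity of $\eD_I^+$ produces $\alpha_\cH(Y_1+X_1)+(1-\alpha_\cH)(Y_2+X_2)\in\eD_I^+$, i.e.\ $\alpha_\cH Y_1+(1-\alpha_\cH)Y_2\in\cM_I(\alpha_\cH X_1+(1-\alpha_\cH)X_2)$, and passing to the essential infimum yields the convexity inequality.

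The only real obstacle is the step "take the essential infimum on both sides" in items (2), (3) and (4): one needs the identities $\essinf(\Gamma_1+\Gamma_2)=\essinf\Gamma_1+\essinf\Gamma_2$ and $\essinf(\alpha_\cH\Gamma)=\alpha_\cH\essinf\Gamma$ for $\alpha_\cH\in\eL(\R_+,\cH)$, which in turn rely on the families $\cM_I(X_i)$ being downward directed so that $\essinf\cM_I(X_i)$ is realized as the a.s.\ limit of a decreasing sequence from $\cM_I(X_i)$. Downward directedness would follow from pasting: for $Y_1,Y_2\in\cM_I(X)$ and $H=\{Y_1\le Y_2\}\in\cH$, the candidate $Y_11_H+Y_21_{H^c}=Y_1\wedge Y_2$ lies in $\cM_I(X)$ provided $\eD_I^+$ is $\cH$-decomposable, which is the mild structural hypothesis implicit in calling $I$ a "conditional operator" here. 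Granting this, each of the four essential infimum manipulations above goes through and the proposition follows.
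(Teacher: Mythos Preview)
Your proposal is correct and follows essentially the same route as the paper, which simply declares the proof ``standard'': establish set inclusions or identities for $\cM_I$ under each operation and pass to the essential infimum.

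One comment: your final worry about downward directedness of $\cM_I(X)$ is unnecessary. In item (4), say, from $\rho_I(X_1+X_2)\le Y_1+Y_2$ for all $Y_i\in\cM_I(X_i)$ you get, fixing $Y_2$, that the $\cH$-measurable variable $\rho_I(X_1+X_2)-Y_2$ is a lower bound for $\cM_I(X_1)$, hence $\rho_I(X_1+X_2)-Y_2\le\rho_I(X_1)$; repeating the step in $Y_2$ yields the sub-additive inequality directly, with no need to realise the infimum as a decreasing limit. The same two-step trick disposes of items (2) and (3). For item (3) your splitting into $\{\alpha_\cH>0\}$ and $\{\alpha_\cH=0\}$ is exactly right; the pasting of the two pieces is justified because $\cH$-positive homogeneity applied with $\alpha_\cH=1_H$ already makes $I$ regular, so that $\rho_I$ inherits the localisation $\rho_I(1_H X)=1_H\rho_I(X)$.
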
 \label{RM}

  \begin{lemm}
 Let us consider a C.I. $I$ and let us define $\rho(X)=I(-X)$. Then, $\rho$ is a conditional risk-measure if and only if $I$ is increasing and $\cH$-translation invariant. 
 \end{lemm}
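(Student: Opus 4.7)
The plan is to observe that the correspondence $\rho(X)=I(-X)$ is just a sign change on the argument, so each of the three defining properties of a conditional risk measure translates directly into a property of $I$ via the substitution $Y=-X$. No deep work is required; it is a pure unpacking of definitions, and I would structure it as an axiom-by-axiom verification of both implications at once.

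First I would set $\eD_\rho:=-\eD_I$ and check that this is an admissible domain for a risk measure: it contains $0$ (since $0\in\eD_I$) and satisfies $\eD_\rho+\eL(\R,\cH)\subseteq\eD_\rho$ because property (P2) of Definition \ref{DefiCI} gives $\eD_I+\eL(\overline{\R},\cH)\subseteq\eD_I$, and $\eL(\R,\cH)\subseteq\eL(\overline{\R},\cH)$ is stable under negation. Next, I would dispatch normalization once and for all: by property (P1) of a C.I. we have $I(0)\in[\essinf_\cH(0),\esssup_\cH(0)]=\{0\}$, so $\rho(0)=I(0)=0$ holds automatically without any extra hypothesis on $I$.

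The remaining content is the equivalence between (monotonicity of $\rho$) $\Leftrightarrow$ ($I$ increasing) and (cash invariance of $\rho$) $\Leftrightarrow$ ($\cH$-translation invariance of $I$). For the first, given $Y_1\le Y_2$ in $\eD_I$, set $X_i=-Y_i$, so $X_1\ge X_2$ in $\eD_\rho$; then $\rho(X_1)\le\rho(X_2)$ reads exactly $I(Y_1)\le I(Y_2)$, and conversely. For the second, cash invariance gives $I(-X-\alpha_\cH)=I(-X)-\alpha_\cH$; putting $Y:=-X\in\eD_I$ and $\beta_\cH:=-\alpha_\cH\in\eL(\R,\cH)$ yields $I(Y+\beta_\cH)=I(Y)+\beta_\cH$, which is the translation invariance of $I$, and this implication is reversible.

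Combining these equivalences, $\rho$ satisfies (P1), (P2), (P3) of a conditional risk measure if and only if $I$ is increasing and $\cH$-translation invariant. The only conceptual point to watch is that the translation condition is stated for $\alpha_\cH\in\eL(\R,\cH)$ (not $\eL(\overline{\R},\cH)$), which matches precisely item 2) of the C.I. property list, so no issue with the $\pm\infty$ conventions arises. There is no genuine obstacle in the argument; the main care is bookkeeping of signs and of the domain.
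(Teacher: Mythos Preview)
Your proposal is correct: it is a clean axiom-by-axiom translation between the three defining properties of a conditional risk measure and the corresponding properties of $I$, with the normalization coming for free from (P1) of Definition~\ref{DefiCI}. The paper itself does not write out a proof of this lemma (it is stated without argument, being an immediate verification), so your unpacking is exactly the kind of justification the authors leave to the reader and matches the intended approach.
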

 
 \begin{lemm}
 
  Let us consider a C.I. $I$ and let us define $\rho(X)=-I(X)$. Then, $\rho$ is a conditional risk-measure if and only if $I$ is increasing and $\cH$-translation invariant. 
 \end{lemm}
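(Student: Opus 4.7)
The plan is to verify the three defining axioms (P1)-(P3) of a conditional risk measure directly against the stated properties of $I$, in both directions. Throughout, I take the domain of $\rho$ to be $\eD_\rho = \eD_I \cap \eL(\R,\cF)$ (or the appropriate restriction making both sides live in $\eL(\overline{\R},\cH)$); note that property (P2) of Definition \ref{DefiCI} already ensures $\eD_I + \eL(\R,\cH) \subseteq \eD_I$, so the ambient domain structure required for a risk measure is automatic.

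For the forward direction, assume $\rho(X)=-I(X)$ is a conditional risk measure. Cash invariance (P3) reads $-I(X+\alpha_{\cH})=-I(X)-\alpha_{\cH}$ for every $X \in \eD_I$ and $\alpha_{\cH}\in \eL(\R,\cH)$; negating yields $I(X+\alpha_{\cH})=I(X)+\alpha_{\cH}$, which is exactly $\cH$-translation invariance. Monotonicity (P2) says $X_1 \ge X_2 \Rightarrow \rho(X_1)\le \rho(X_2)$; substituting $\rho=-I$ and negating gives $I(X_1)\ge I(X_2)$, so $I$ is increasing.

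For the converse, assume $I$ is increasing and $\cH$-translation invariant. Normalization (P1): since $0\in \eL(\overline{\R},\cH)$, the idempotence of any C.I. on $\cH$-measurable variables (noted in the remark following Definition \ref{DefiCI}) gives $I(0)=0$, hence $\rho(0)=0$. For monotonicity, $X_1\ge X_2$ forces $I(X_1)\ge I(X_2)$ by assumption, so $\rho(X_1)=-I(X_1)\le -I(X_2)=\rho(X_2)$. For cash invariance, $\rho(X+\alpha_{\cH})=-I(X+\alpha_{\cH})=-I(X)-\alpha_{\cH}=\rho(X)-\alpha_{\cH}$.

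There is essentially no obstacle: the lemma is a bookkeeping exercise translating three algebraic relations by the sign change $\rho=-I$. The only substantive point worth isolating is that $I(0)=0$ is not an extra hypothesis but follows automatically from the idempotence of a C.I. on $\cH$-measurable variables; this is what makes normalization free in the converse direction.
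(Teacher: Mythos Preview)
Your proof is correct. The paper states this lemma without proof (treating it as immediate), and your direct verification of axioms (P1)--(P3) against the definitions of increasing and $\cH$-translation invariant, together with the observation that $I(0)=0$ comes for free from idempotence, is exactly the natural argument one would supply.
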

 
 Under some conditions, we may show that a risk-measure admits a dual representation at least on $L^1(\R,\cF_T)$, see \cite{FP} and the recent result on $L^0(\R,\cF_T)$ in  \cite{LV1}. The open question is whether a conditional indicator may have such a characterization, at least if it is convex.

\section{The  conditional expectation indicator}\label{CEI}
 The  conditional expectation knowing $\cH\subseteq \cF$ is defined on  $\eL(\R, \cF)$ with the conventions introduced in the beginning of the paper by:
  $$\E(X \vert \cH):=\E(X^+\vert \cH)-\E(X^-\vert \cH).$$
  
  \begin{lemm} \label{LemmCondExp} The mapping $I(X)= \eE(X\vert \cH)$ is a conditional indicator on $X\in \mathbb{D}_{I}= \eL(\R,\cF)$. Moreover, if $X\in \eL(\R,\cF)$,  we have:
\bea \label{RestOper}
1_{H}I(X)&=&I(X1_{H}),\,{\rm for\,all\,} H\in \cH,\\
\label{LinExp}
I(X+\alpha_{\cH})&=&I(X)+\alpha_{\cH}, {\rm on\, the\,set\,} \{ \E(X^+\vert \cH)\ne +\infty \,{\rm or \,} \E(X^-\vert \cH)\ne +\infty \},\quad \quad \\ \label{LinMult}
I(\alpha_{\cH}X)&=&\alpha_{\cH}I(X),\,{\rm for\, all\,} \alpha_{\cH}\in \eL(\R,\cH).
\eea

\end{lemm}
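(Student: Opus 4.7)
The plan is to verify the two defining properties (P1)--(P2) of Definition~\ref{DefiCI} for $I(X) = \E(X^+|\cH) - \E(X^-|\cH)$, and then to establish the three identities (\ref{RestOper})--(\ref{LinMult}) in order. All of the work reduces to the classical monotonicity and take-out properties of the (nonnegative) conditional expectation applied separately to $X^+$ and $X^-$, combined with the arithmetic conventions for $\overline{\R}$ fixed at the start of Section~\ref{SecDef}. Property (P2) is immediate from $r\pm\infty=\pm\infty$, with the understanding that the domain $\mathbb{D}_I$ in the statement has to be read in the extended-real sense required by Definition~\ref{DefiCI}.

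To check (P1), I would set $L = \essinf_{\cH}(X)$ and $U = \esssup_{\cH}(X)$. From $X \le U$ a.s., monotonicity of $\max(\cdot,0)$ gives $X^+ \le U^+$ and $X^- \ge U^-$, hence $\E(X^+|\cH) \le U^+$ and $\E(X^-|\cH) \ge U^-$, so $I(X) \le U^+ - U^- = U$; the lower bound $I(X) \ge L$ is symmetric. Because $X$ is real-valued, $U>-\infty$ and $L<+\infty$ automatically, which avoids any pathological invocation of $\infty-\infty$ here.

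For (\ref{RestOper}), with $H \in \cH$ I would apply the standard take-out-what-is-known property to $X^+$ and $X^-$ separately, obtaining $\E(X^\pm 1_H|\cH) = 1_H\,\E(X^\pm|\cH)$, and subtract using $1_H(a-b) = 1_H a - 1_H b$. For (\ref{LinExp}) with $Y = X + \alpha_\cH$, nonnegative linearity together with $Y^+ - Y^- = X^+ - X^- + \alpha_\cH$ gives formally $\E(Y^+|\cH) - \E(Y^-|\cH) = \E(X^+|\cH) - \E(X^-|\cH) + \alpha_\cH$. The sole obstruction is the $\infty-\infty$ ambiguity on the set $\{\E(X^+|\cH) = \E(X^-|\cH) = +\infty\}$; on its complement, which is exactly the exceptional set in the statement, the manipulation is unambiguous because $\alpha_\cH \in \eL(\R,\cH)$ is real-valued and so cannot create new infinities in $\E(Y^\pm|\cH)$ beyond those already present.

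Finally, for (\ref{LinMult}) I would split on $A = \{\alpha_\cH \ge 0\}$ and $A^c$, using the $\cH$-measurability of $A$ together with (\ref{RestOper}). On $A$ we have $(\alpha_\cH X)^\pm = \alpha_\cH X^\pm$, and pulling out the nonnegative $\cH$-measurable factor yields $I(\alpha_\cH X) = \alpha_\cH\,\E(X^+|\cH) - \alpha_\cH\,\E(X^-|\cH) = \alpha_\cH I(X)$ via $\alpha(a-b) = \alpha a - \alpha b$, the point $\alpha_\cH=0$ being absorbed by $0\times\pm\infty=0$. On $A^c$ one has $(\alpha_\cH X)^+ = |\alpha_\cH| X^-$ and $(\alpha_\cH X)^- = |\alpha_\cH| X^+$, so the same pull-out produces $|\alpha_\cH|(\E(X^-|\cH) - \E(X^+|\cH)) = \alpha_\cH I(X)$. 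The main obstacle throughout is the careful bookkeeping around the extended-real arithmetic, in particular pinning down the exact set on which (\ref{LinExp}) may fail; once the conventions are honestly invoked, the substantive content is just an unpacking of classical conditional expectation properties.
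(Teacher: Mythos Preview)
Your proposal is correct and follows essentially the same strategy as the paper: decompose $X=X^+-X^-$, use the classical pull-out and monotonicity properties of the nonnegative conditional expectation separately on $X^+$ and $X^-$, and track the extended-real conventions. The treatments of (\ref{RestOper}) and (\ref{LinMult}) are virtually identical to the paper's.

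There is one organizational difference worth noting. You establish (P1) first and directly, via $X^{\pm}\le U^{\pm}$ (respectively $X^{\pm}\ge L^{\pm}$) and monotone conditional expectation. The paper proves (\ref{RestOper})--(\ref{LinMult}) first and then deduces (P1) from (\ref{LinExp}): it writes $\E(X-\alpha_{\cH}\,|\,\cH)=\E(X\,|\,\cH)-\alpha_{\cH}$ with $\alpha_{\cH}=\esssup_{\cH}(X)$ (after checking $\E(X^+|\cH)<\infty$ on $\{\alpha_{\cH}\ne+\infty\}$) and uses $X-\alpha_{\cH}\le0$. Your route is slightly more elementary here and avoids the circular-looking appeal to (\ref{LinExp}).

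Conversely, your argument for (\ref{LinExp}) is only a sketch: the sentence ``nonnegative linearity together with $Y^+-Y^-=X^+-X^-+\alpha_{\cH}$ gives formally \dots'' hides the rearrangement $Y^++X^-+\alpha_{\cH}^-=Y^-+X^++\alpha_{\cH}^+$ (both sides nonnegative) and the subsequent case analysis needed to rearrange back in $\overline{\R}$. The paper handles this rigorously by invoking the following lemma (Lemma~\ref{CondExpt-Linear}), whose proof carries out exactly that case distinction. Your identification of the exceptional set $\{\E(X^+|\cH)=\E(X^-|\cH)=+\infty\}$ is correct, so the sketch is sound in spirit.
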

\begin{proof} Note that $(X1_H)^+=1_HX^+$ and $(X1_H)^-=1_HX^-$ so that (\ref{RestOper}) holds by the property satisfied by the usual conditional expectation defined on the non negative random variables. Therefore, we may show the next properties on each subset of  a  $\cH$-measurable partition of $\Omega$. Actually, the equality  (\ref{LinExp}) is a particular case of Lemma \ref{CondExpt-Linear} we show below.

To show (\ref{LinMult}), it suffices to consider the cases $\alpha_{\cH}\ge 0$ and $\alpha_{\cH}<0$. When, $\alpha_{\cH}<0$, we have $(\alpha_{\cH}X)^+=|\alpha_{\cH}|X^-$ and $(\alpha_{\cH}X)^-=|\alpha_{\cH}|X^+$. We deduce that $\E(\alpha_{\cH}X\vert \cH)=|\alpha_{\cH}|\E(X^-\vert \cH)-|\alpha_{\cH}|\E(X^+\vert \cH)$ and we conclude  that $\E(X\vert \cH)=-\alpha_{\cH}\E(X^-\vert \cH) +\alpha_{\cH}\E(X^+\vert \cH)=\alpha_{\cH}\E(X\vert \cH)$. \smallskip

At last, $X\le \alpha_{\cH}=\esssup_{\cH}(X)$ and it is clear that $\E(X\vert \cH)\le \alpha_{\cH}$ when $\alpha_{\cH}=+\infty$. Otherwise, as $\E(X^+\vert \cH)\le (\esssup_{\cH}(X))^+$, $\alpha_{\cH}\ne +\infty$ implies that $\E(X^+\vert \cH)\ne +\infty$ hence (\ref{LinExp}) applies. So, $\E(X-\alpha_{\cH}\vert \cH)=\E(X\vert \cH)-\alpha_{\cH}$. As, $X-\alpha_{\cH}\le 0$, we get that $\E(X-\alpha_{\cH}\vert \cH)\le 0$ hence 
$\E(X\vert \cH)-\alpha_{\cH}\le 0$ and, finally, $\E(X\vert \cH)\le \alpha_{\cH}$. Indeed, it suffices to observe that $\E(X\vert \cH)\ne +\infty$.

As $X\ge \alpha_{\cH}=\essinf_{\cH}(X)$, we conclude similarly and the conclusion follows.
\end{proof}

\begin{lemm}\label{CondExpt-Linear}
 Let us consider the operator $I(X)= \eE(X\vert \cH)$  on $ \mathbb{D}_{L}= \eL(\R,\cF)$. If $X, Y\in \eL(\R,\cF)$,  then we have $I(X+Y)=I(X)+I(Y)$  on the set $F=\bigcup _{i\in [1,5]} F_i\in\cH$ where
 
\bean F_1&=&\{\E(|X|\vert \cH),\E(|Y|\vert \cH)\in \R\},\\
F_2&=&\{\E(X^+\vert \cH)=+\infty, (\E(X^-\vert \cH) ,\E(Y^-\vert \cH))\in \R^2\},\\
F_3&=&\{\E(X^-\vert \cH)=+\infty, (\E(X^+\vert \cH),\E(Y^+\vert \cH))\in \R^2\},\\
F_4&=&\{\E(Y^+\vert \cH)=+\infty, (\E(X^-\vert \cH) ,\E(Y^-\vert \cH))\in \R^2\},\\
F_5&=&\{E(Y^-\vert \cH)=+\infty, (\E(X^+\vert \cH),\E(Y^+\vert \cH))\in \R^2\}.
\eean
\end{lemm}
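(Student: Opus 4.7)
The plan is to reduce the problem to a case analysis on each $F_i$ by exploiting the $\cH$-decomposability provided by equality (\ref{RestOper}) of Lemma \ref{LemmCondExp}, and then on each piece compare the two sides of the claimed identity by controlling the positive and negative parts of $X+Y$ through elementary inequalities.

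First, observe that each $F_i$ lies in $\cH$. Applying (\ref{RestOper}) to the random variables $X$, $Y$ and $X+Y$ separately, proving $I(X+Y)=I(X)+I(Y)$ on $F_i$ is equivalent to proving $I(X1_{F_i}+Y1_{F_i}) = I(X1_{F_i})+I(Y1_{F_i})$ as an identity of random variables on $\Omega$. Because multiplication by $1_{F_i}$ preserves the hypotheses defining $F_i$ (positive and negative parts are multiplied by the same indicator), I may, after replacing $X,Y$ by $X1_{F_i}, Y1_{F_i}$, assume throughout that the characterizing condition of $F_i$ holds on the whole $\Omega$.

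On $F_1$, both $\E(|X|\,|\cH)$ and $\E(|Y|\,|\cH)$ are finite, hence $|X+Y|\le |X|+|Y|$ gives $\E(|X+Y|\,|\cH)\in\R$, and classical $\cH$-linearity of the conditional expectation on $\cH$-integrable variables yields the identity directly. On $F_2$, the key observation is the pointwise identity
\[ X^{+}+Y^{+} = (X+Y)+X^{-}+Y^{-} \le (X+Y)^{+}+X^{-}+Y^{-}, \]
which after taking conditional expectations and using the assumed finiteness of $\E(X^{-}|\cH)$ and $\E(Y^{-}|\cH)$ forces $\E((X+Y)^{+}|\cH)=+\infty$; meanwhile the bound $(X+Y)^{-}\le X^{-}+Y^{-}$ keeps $\E((X+Y)^{-}|\cH)$ finite. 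Hence $I(X+Y)=+\infty$. Computing the right-hand side: $I(X)=+\infty-\E(X^{-}|\cH)=+\infty$ by the conventions, while $I(Y)$ is either a finite real (if $\E(Y^{+}|\cH)<\infty$) or $+\infty$ (if $\E(Y^{+}|\cH)=+\infty$, because the subtracted $\E(Y^{-}|\cH)$ is finite); in either situation $I(Y)$ is not $-\infty$, and therefore $I(X)+I(Y)=+\infty=I(X+Y)$ by the convention $r+\infty=\infty+\infty=\infty$.

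The sets $F_3$, $F_4$, $F_5$ are handled by the obvious symmetries: $F_3$ by swapping $+$ and $-$ (applied to the analogous bound $X^{-}+Y^{-}\le (X+Y)^{-}+X^{+}+Y^{+}$), and $F_4$, $F_5$ by interchanging $X$ and $Y$ in $F_2$ and $F_3$ respectively. The main obstacle is bookkeeping the extended-real conventions carefully in each infinite case: one must check in every subcase that neither side of $I(X)+I(Y)$ produces the pathological combination $+\infty+(-\infty)=0$, which is guaranteed precisely by the way the five sets $F_i$ were defined, namely by excluding simultaneous infinities of opposite signs on the relevant summands.
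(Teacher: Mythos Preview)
Your proof is correct and follows essentially the same route as the paper: both arguments rest on the elementary subadditivity inequalities for positive and negative parts (the paper writes them as $\E((X+Y)^{\pm}\vert\cH)\le \E(X^{\pm}\vert\cH)+\E(Y^{\pm}\vert\cH)$ and $\E(X^{+}\vert\cH)\le \E((X+Y)^{+}\vert\cH)+\E(Y^{-}\vert\cH)$, which is equivalent to your identity $X^{+}+Y^{+}=(X+Y)+X^{-}+Y^{-}\le (X+Y)^{+}+X^{-}+Y^{-}$), followed by the same case analysis on the $F_i$'s. The only difference is cosmetic: you first localize via (\ref{RestOper}) to assume the defining condition of each $F_i$ holds globally, whereas the paper argues directly on each $F_i$ without this preliminary reduction.
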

\begin{proof} In the following, we shall use the convexity and positive homogeneity of the mappings $x\mapsto x^+=\max(x,0)$ and $x\mapsto x^-=\max(-x,0)$. This implies that $(x+y)^+\le x^++y^+$ and $(x+y)^-\le x^-+y^-$ for all $x,y\in \R$. Consider $X,Y\in \eL(\R,\cF)$. We then have:
\bea \label{IneqAuxLemmCondExp} 
&&\E((X+Y)^+\vert \cH)\le \E(X^+\vert \cH)+ \E(Y^+\vert \cH),\\\nonumber
&&  \E((X+Y)^-\vert \cH)\le \E(X^-\vert \cH)+\E(Y^-\vert \cH),\\ \nonumber
&&\E(X^+\vert \cH)\le \E((X+Y)^+\vert \cH)+\E(Y^-\vert \cH),\\
&& \E(X^-\vert \cH)\le \E((X+Y)^-\vert \cH)+\E(Y^+\vert \cH). \nonumber
\eea

Note that the inequalities above are obvious as soon as one of the terms in the r.h.s. is $+\infty$. Otherwise, we may argue as if the random variables were integrable. \smallskip

\noindent 1rst case: On the set $F_1$, $X$ and $Y$ are integrable so the result  holds by linearity of the conditional expectation for integrable random variables. \smallskip

\noindent 2nd case: On the set $F_2$, by (\ref{IneqAuxLemmCondExp}), we have $\E((X+Y)^-\vert \cH)\in \R$ and $\E((X+Y)^+\vert \cH)=+\infty$. Therefore, $L(X+Y)=L(X)+L(Y)=+\infty$ and the equality holds.\smallskip

\noindent 3rd case: On the set $F_3$, by (\ref{IneqAuxLemmCondExp}), we have $\E((X+Y)^-\vert \cH)=+\infty$ and $\E((X+Y)^+\vert \cH)\in \R$. Therefore, $L(X+Y)=L(X)+L(Y)=-\infty$.\smallskip

By symmetry, the same conclusion holds on the  subsets $F_4$ and $F_5$.  \end{proof}

\begin{prop}
Let $\mathcal{H}$ be a sub $\sigma$-algebra of $\mathcal{F}$  and let $I$ be a C.I. w.r.t. $\mathcal{H}$ such that $\eD_{I}=L^1(\R,\cF)$ and
\begin{itemize}

\item [1)] $I^*=I$
 \item [2)] $I$ is sub-additive (respectively super-additive).
 \item [3)] $E(|I(X)| )\le  \E( |X| )$.
\end{itemize}
Then $I= \E(. | \mathcal{H})$ on $L^1(\R,\cF)$.
\end{prop}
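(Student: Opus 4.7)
The plan is to first upgrade the one-sided (sub- or super-) additivity to genuine additivity by means of self-duality, then invoke Theorem~\ref{linear} to obtain $\cH$-linearity, and finally identify $I$ with the conditional expectation through a Riesz representation argument that exploits the $L^1$-contraction property.

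First I would deduce that $I$ is additive. Assume $I$ is sub-additive; the super-additive case is symmetric. Applying sub-additivity at the pair $(-X,-Y)$ and using $I^*(Z)=-I(-Z)=I(Z)$ gives
$$-I(X+Y) = I(-X-Y) \le I(-X) + I(-Y) = -I(X) - I(Y),$$
hence $I(X+Y) \ge I(X) + I(Y)$. Combined with sub-additivity, $I$ is additive on $L^1(\R,\cF)$. Since $\eD_I = L^1(\R,\cF)$ is a vector space and $I$ is additive and self-dual, Theorem~\ref{linear} yields that $I$ is $\cH$-linear. Moreover $L^1$ is $\cH$-decomposable, so Lemma~\ref{add} provides regularity: $I(X 1_A) = I(X) 1_A$ for every $A \in \cH$ and $X \in L^1$.

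Next I would represent the functional $\phi(X):=\E(I(X))$ via Riesz. By hypothesis (3), $|\phi(X)| \le \E(|I(X)|) \le \E(|X|)$, so $\phi$ is a continuous linear form on $L^1$, and there exists $g \in L^{\infty}(\R,\cF)$ with $\E(I(X)) = \E(Xg)$ for all $X \in L^1$. Positivity of $I$ (Lemma~\ref{pos}) forces $g \ge 0$ a.s. Using regularity, for any $A \in \cH$ and $X \in L^1$,
$$\E(I(X) 1_A) = \E(I(X 1_A)) = \E(X 1_A g) = \E\bigl(\E(Xg\,|\,\cH)\,1_A\bigr),$$
and since $I(X)$ is $\cH$-measurable, this identifies $I(X) = \E(Xg \,|\, \cH)$ for every $X \in L^1$.

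It remains to show that $g = 1$ a.s. The idempotence of $I$ on $L^1(\R,\cH)$ gives $X = I(X) = X\,\E(g|\cH)$ for every such $X$, hence $\E(g|\cH) = 1$ a.s. On the other hand, for $X \ge 0$ in $L^1$ the contraction (3) reads $\E(Xg) = \E(I(X)) = \E(|I(X)|) \le \E(X)$, so $\E(X(1-g)) \ge 0$ for every non-negative $X \in L^1$; specializing to $X = 1_{\{g>1\}}$ yields $g \le 1$ a.s. Combining $1-g \ge 0$ with $\E(1-g\,|\,\cH)=0$ forces $g = 1$ a.s., and therefore $I(X) = \E(X|\cH)$ on $L^1(\R,\cF)$. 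The main obstacle I anticipate is this final density-identification step: the three hypotheses have to be combined in the right order — positivity gives $g\ge 0$, the contraction gives $g\le 1$, and idempotence on $\cH$-measurables gives $\E(g|\cH)=1$ — and it is the interplay of all three that pins $g$ down to $1$. The additivity-via-self-duality step is essentially formal; the real content lies in the $L^{\infty}$-density analysis.
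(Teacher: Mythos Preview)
Your argument is correct. The opening step---using self-duality to upgrade sub-additivity to genuine additivity and then invoking Theorem~\ref{linear}---is exactly what the paper does (though the paper is terser about the sub-additive\,$\Rightarrow$\,additive implication). After that the two proofs diverge.

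The paper simply observes that $I$ is a positive linear contractive projection on $L^1$ with $I(1)=1$ and invokes Douglas' Theorem (the characterization of conditional-expectation operators as such projections, \cite{AAB}) to conclude $I=\E(\,\cdot\,|\,\mathcal{G})$ for the $\sigma$-algebra $\mathcal{G}$ generated by the fixed points of $I$, which here is $\mathcal{H}$. Your route is more hands-on: you represent the functional $X\mapsto \E(I(X))$ by a density $g\in L^\infty$ via the duality $(L^1)^*=L^\infty$, use regularity to identify $I(X)=\E(Xg\,|\,\cH)$, and then pin down $g=1$ by combining positivity ($g\ge 0$), the contraction bound ($g\le 1$), and idempotence on $\cH$-measurables ($\E(g|\cH)=1$). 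In effect you have reproved, in this concrete setting, the relevant instance of Douglas' Theorem. The trade-off is that the paper's proof is a two-line appeal to a known result, while yours is self-contained and makes transparent exactly which hypothesis is responsible for each inequality on $g$; both are perfectly acceptable.
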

\begin{proof}
By Proposition \ref{linear}, $I$ is a linear indicator. As $1 \in \eL(\R,\mathcal{H})$,  $I(1)= 1$. Moreover, $I$ is contractive by assumption. We conclude by Douglas Theorem, see   \cite{AAB}, that  $I= \E(. |G)$ where $G$ is the $\sigma$-algebra generated by the fixed points  of $I$.  It is clear that $G={\mathcal{H}}$  hence $I= \E(. |{\mathcal{H}})$. 
\end{proof}

\begin{theo}\label{main}

Suppose that  $I$ is a conditional indicator defined on the domain $\eD_I=\mathbb{L}^{1}(\R , \mathcal{F})$  and satisfies the following properties: 
\begin{itemize} 

\item [1)] $I$ is self-dual.

\item [2)] $I(X+Y)= I(X)+I(Y)$ for all $X ,Y\in \mathbb{L}^{1}(\R , \mathcal{F}) $. 
 
  \item [3)]  $I$ satisfies the Fatou property i.e.,  for any sequence $(X_n)_n$ of  $\mathbb{L}^{1}(\R , \mathcal{F})$, we have $I(\liminf_n X_n) \leq  \liminf I(X_n) $.
\end{itemize}
Then, there exists a probability measure $\mu <<\mathbb{P}$, with $\rho = d\mu / d\mathbb{P} \in \mathbb{L}^{1}(\R_+, \mathcal{F})$ such that  $ I(X)=E_\mu(  X \vert \mathcal{H})=\E(\rho X | \mathcal{H})$, for all $X \in  \mathbb{L}^{1}(\R , \mathcal{H}). $
\end{theo}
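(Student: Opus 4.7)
The plan is to build $\rho = d\mu/d\mathbb{P}$ from the set function $A \mapsto \E[I(1_A)]$ and then extend the representation from indicators to all of $\mathbb{L}^{1}(\R,\mathcal{F})$ by Fatou-type arguments. First, hypotheses~(1) and~(2) place us in the setting of Theorem~\ref{linear}, so $I$ is $\mathcal{H}$-linear. Being a C.I., $I$ is positive (Lemma~\ref{pos}); combined with additivity this gives monotonicity, and Lemma~\ref{add} yields regularity. Moreover, self-duality upgrades the lower Fatou assumption~(3) to an upper Fatou inequality: $I(\limsup_n X_n) = -I(\liminf_n(-X_n)) \ge -\liminf_n I(-X_n) = \limsup_n I(X_n)$.

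Define $\phi : \mathcal{F} \to [0,\infty)$ by $\phi(A) := \E[I(1_A)]$. Positivity is immediate, finite additivity comes from additivity of $I$, and $\phi(\Omega) = \E[I(1)] = 1$ because $1 \in \eL(\R,\mathcal{H})$. For $\sigma$-additivity, take $A_n \searrow \emptyset$: upper Fatou gives $\limsup_n I(1_{A_n}) \le I(0) = 0$, and monotonicity bounds $I(1_{A_n})$ in $[0, I(1)] = [0,1]$, so bounded convergence yields $\phi(A_n) \to 0$. Absolute continuity is clear: $\mathbb{P}(A)=0$ gives $1_A = 0$ a.s.\ and hence $\phi(A) = 0$. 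The Radon--Nikodym theorem then produces $\rho := d\phi/d\mathbb{P} \in \mathbb{L}^{1}(\R_+,\mathcal{F})$, and we set $\mu := \phi$.

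For the representation, fix $A \in \mathcal{F}$ and $H \in \mathcal{H}$; regularity gives $\E[I(1_A) 1_H] = \E[I(1_{A\cap H})] = \phi(A\cap H) = \E[\rho 1_A 1_H] = \E[\E(\rho 1_A \mid \mathcal{H}) 1_H]$, so the $\mathcal{H}$-measurability of both $I(1_A)$ and $\E(\rho 1_A\mid\mathcal{H})$ forces $I(1_A) = \E(\rho 1_A \mid \mathcal{H})$. By $\mathcal{H}$-linearity this passes to $\mathcal{H}$-simple combinations. For $X \ge 0$ in $\mathbb{L}^{1}(\R,\mathcal{F})$, approximate by simple $X_n \nearrow X$: monotonicity gives $I(X_n) \le I(X)$, lower Fatou gives $I(X) \le \liminf_n I(X_n)$, so $I(X_n) \nearrow I(X)$; the conditional monotone convergence theorem then yields $\E(\rho X_n\mid\mathcal{H}) \nearrow \E(\rho X\mid\mathcal{H})$. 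General $X \in \mathbb{L}^{1}(\R,\mathcal{F})$ follows by splitting $X = X^+ - X^-$ and applying additivity in the extended sense of Section~\ref{CEI}. In particular $\E(\rho\mid\mathcal{H}) = I(1) = 1$, which makes $\E_\mu(X\mid\mathcal{H}) = \E(\rho X\mid\mathcal{H})$, and the claim on $\mathbb{L}^{1}(\R,\mathcal{H})$ is subsumed by the idempotence of $I$ on $\mathcal{H}$-measurable random variables.

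The main obstacle is the Fatou-type passage from $\mathcal{H}$-simple functions to general $X \in \mathbb{L}^{1}(\R,\mathcal{F})$: one inequality comes from monotonicity, the reverse from the self-dual Fatou property, and the $\pm\infty$ conventions of Section~\ref{CEI} are needed because $\rho X^+$ and $\rho X^-$ need not be $\mathbb{P}$-integrable individually, so the sign decomposition has to be interpreted through the extended conditional expectation.
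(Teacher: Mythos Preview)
Your proof is correct and follows essentially the same strategy as the paper: define the measure $\mu(A)=\E[I(1_A)]$, extract $\rho=d\mu/d\mathbb{P}$ by Radon--Nikodym, identify $I$ with $\E(\rho\,\cdot\mid\mathcal{H})$ on indicators, and climb to $\mathbb{L}^1$ via simple approximation plus the two-sided Fatou bounds. The only cosmetic differences are that you verify $\sigma$-additivity through continuity from above (using the upper Fatou you derived from self-duality) whereas the paper checks both inequalities on countable disjoint unions, and that you identify $I(1_A)=\E(\rho 1_A\mid\mathcal{H})$ directly under $\mathbb{P}$ while the paper routes the same computation through $\E_\mu$ before invoking $\E(\rho\mid\mathcal{H})=1$.
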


\begin{proof}
Since $L$ is sub-additive and self-dual, we deduce   
by Lemma \ref{linear} that $L$ is $\mathcal{H}$-linear.  Moreover,  $L$ is increasing by Lemma \ref{pos}. Let us define the mapping $\mu(A) = \E( I(1_A)) $, for any $A \in \mathcal{F}$. Let us prove that $\mu$ is a probability measure. 

As $1_A \in [0,1]$, then $I(1_A) \in [0,1]$ a.s. hence $\mu(A)\in [0,1]$. Moreover, $I(1_\Omega) =I(1)=1$. So $\mu(\Omega)=1$. Also $\mu(\emptyset)= \E( I(0))=0$. Consider a partition $(A_n)_{n\in \N}$  of $\Omega$. We have $$\sum_{n=0} ^\infty 1_{A_n}= \lim_{N\rightarrow\infty } \sum_{n=0} ^N 1_{A_n} \ge \sum_{n=0} ^N 1_{A_n}.$$
Therefore,
 \bean I\left(\sum_{n=0} ^\infty 1_{A_n}\right)  \ge  \lim_{N\rightarrow\infty } I\left( \sum_{n=0} ^N 1_{A_n}\right) =\lim_{N\rightarrow\infty }  \sum_{n=0} ^N I( 1_{A_n}).
 \eean
  
 We deduce that $\mu \left(\sum_{n=0} ^\infty 1_{A_n}\right) \ge  \ \sum_{n=0} ^\infty \mu ( 1_{A_n})$. Moreover, by the Fatou property, we have
 \bean I\left( \lim_{N\rightarrow\infty } \uparrow \sum_{n=0} ^N 1_{A_n}\right) \le  \lim_{N\rightarrow\infty } \uparrow I\left( \sum_{n=0} ^N 1_{A_n}\right) =\lim_{N\rightarrow\infty }  \sum_{n=0} ^N I( 1_{A_n}). \eean
  So, $\mu (\sum_{n=0} ^\infty 1_{A_n}) \le  \ \sum_{n=0} ^\infty \mu ( 1_{A_n})$. We conclude that $\mu$ is a probability measure. Note that, if $\mathbb{P}(N) = 0$, then $I( 1_{N})=I( 0)=0$, i.e. $\mu(N)=0$. Therefore $\mu$ is absolutely continuous w.r.t. $P$. Let $ \rho= d\mu/ dP $ be the Radon-Nikodym derivative. We aim to show that $I=\E_\mu [.|H]$.

Consider  $A \in  \mathcal{H}$. In one hand,  $ \mu(A) =\E( I(1_A)) = \E(1_A)$ by definition of $\mu$ and $I$. In the other hand, $\mu(A) =\E( \rho 1_A)$ as $ \rho= d\mu/ d\mathbb{P}$. Therefore, $\E( \rho 1_A)=\E(  1_A)$ for any $A \in  \mathcal{H}$ hence $\E( \rho | \mathcal{H})=1$. Moreover, for any $A \in \mathcal{F}$, as  $I(1_A)$ is $\mathcal{H}$-measurable, we get that
$$\E_\mu(I(1_A))= \E( \rho I(1_A))= \E(I(1_A))= \mu(A)= \E_\mu(1_A).$$
So, for any $A \in \mathcal{F}$ and $B \in \mathcal{H}$, we have:
$$\E_\mu(1_A 1_B)= \E_\mu(I(1_A 1_B))=\E_\mu(1_BI(1_A )).$$
This implies that $ \E_\mu[1_A | \mathcal{H}]= I(1_A)$,  for all $A \in \mathcal{F}.$

Consider now $X \in \mathbb{L}^{0}(\R_+, \mathcal{F}).$ We use the standard arguments, i.e. we have $X=  \lim_n\uparrow X_n$   where   $X^n=\sum_{i=0} ^n \alpha_i^n 1_{A_i^n}$, $(A_i^n)_i$ is a partition of $\Omega$ in $\cF$ and $\alpha_i^n\in \R$.  By the Fatou property,  $I(X)\le \lim_{n}\uparrow \sum_{i=0} ^n  \alpha_i^n  I(1_{A_i^n}).$ 
 On the other hand, $X \ge  \sum_{i=0} ^n \alpha_i^n 1_{A_i^n}$. implies that  $I(X)\ge  \sum_{i=0} ^n  \alpha_i^n  I(1_{A_i^n})$ for any $n$. Therefore, $I(X)\ge \lim_{n}\uparrow \sum_{i=0} ^n  \alpha_i^N  I(1_{A_i^n})$. We deduce that:
  $$I(X) =  \lim_{n}\uparrow \sum_{i=0} ^n  \alpha_i^n  I(1_{A_i^n})=  \lim_{n}\uparrow \sum_{i=0} ^n  \alpha_i^n \E_\mu [ 1_{A_i^n}|\mathcal{H}]= \E_\mu [ X|\mathcal{H}].$$  

Finally,  for any $X \in \mathbb{L}^{1}(\R, \mathcal{F})$,  we have:
$$I(X)=I(X^+)-I(X^-)=  \E_\mu [ X^+|\mathcal{H}]- \E_\mu [ X^-|\mathcal{H}]= \E_\mu [ X|\mathcal{H}].$$ 
   Since  $\E(\rho |H)=1$, we finally deduce that  $I(X)  =\E(\rho X | \mathcal{H}).$
 \end{proof}
 
 In the following, we construct linear indicators that are not conditional expectations. \smallskip

\noindent {\bf Counter-example} The following is standard. Consider the space $\Omega=\N$ of all non negative integers endowed with the $\sigma$-algebra $\cF$ of all subsets of $\N$. The probability measure is defined as $P(A)=\sum_{n=0}^\infty
 2^{-n-1}\delta_n(A)$ where $\delta_n$ is the Dirac measure at point $n$. Let $\cF_0=\{\emptyset,\Omega\}$ be the trivial sub $\sigma$-algebra. Each random variable w.r.t. $(\Omega,\cF,P)$ is identifiable with the sequence $(X(n))_{n\in \N}$ and we have $X_k$ converges a.s. to $X$ when $k\to \infty$ if and only if $X_k(n)\to X(n)$, for all $n\in \N$. The $L^\infty$ norm is $\|X\|_{\infty}=\sup_n|X(n)|$ and we have $\esssup_{\cF_0}=\sup_nX(n)$ and $\essinf_{\cF_0}=\inf_nX(n)$. In the following, we consider the set $\eD$ of all $X\in L^0(\R,\cF)$ such that $\lim_n X(n)$ exists in $\R$. We define the linear positive operator $T(X)=\lim_n X(n)$ on the domain $\eD$. Note that $X\in  L^0(\R,\cF_0)$ if and only if $X$ is a constant sequence so that $X\in \eD$. In particular, $T(X)=X$ for any $X\in  L^0(\R,\cF_0)$ and $T$ is a C.I. w.r.t. $\cF_0$. As $|T(X)|\le \|X\|_{\infty}$ for all $X\in  L^\infty(\R,\cF)$, the Hahn-Banach theorem states the existence of a (continuous) linear mapping $\bar T$ defined on the whole space $L^\infty(\R,\cF)\supset \eD$ such that $\bar T=T$ on $\eD$, see \cite{Bourb}[Chapter 2, 3].
 
 For every $A\in \cF$, consider the random variable
 $$X^A(\omega)=\frac{{\rm card}(A\cap [0,n])}{n+1},$$ where ${\rm card}$ designates the number of elements that contains a subset. We have $X^A\in  L^\infty[0,1],\cF)$. Let us define $m(A)=\bar T(X^A)$. We may show that $m(\emptyset)=0$, $m(\Omega)=1$ and $m(\sum_{i=1}^nA_i)=\sum_{i=1}^n m(A_i)$ if $(A_i)_{i=1}^n$ is a finite partition. 
 
 Moreover, suppose that $\bar T$ is an expectation on $L^\infty(\R,\cF)$, i.e. there exists an integrable random variable $Y\in L^1(\R_+,\cF)$ such that $\bar T(X)=E(YX)$ for any $X\in L^\infty(\R,\cF)$. In that case, if $(A_i)_{i=1}^n$ is an infinite partition, we get that 
 $$1=m(\Omega)=\sum_{i=1}^\infty m(A_i).$$
With $A_i=\{i\}$, $i\ge 0$, we get that $\hat T(X^{A_i})=T(X^{A_i})=0$, i.e. $m(A_i)=0$ for all $i\in \N$.  This is in contradiction with the equality above.

Another example is to consider $\cF_1=\{\emptyset,\Omega,I,I^c\}$ where $I^c=\Omega\setminus I$ and $I=2\N+1$. Le us introduce the indicator:
\bean T_1(X)&=&\hat T(\tilde X) 1_I+E(X|\cF_1)1_{I^c},\quad X\in L^\infty(\R,\cF),\\
\tilde X(n)&=&X(n)1_{I}(n)+X(n+1)1_{I^c}(n).
\eean
We observe that $T_1$ is linear, $T_1(X)$ is $\cF_1$-measurable for any $X\in L^\infty(\R,\cF)$, i.e.  $T_1(X)$ is constant on $I$ and $I^c$ respectively and, moreover, $T_1(X)=X$ if $X\in L^\infty(\R,\cF)$. Therefore, we deduce by monotony that $T_1$ is a linear conditional indicator. By the same arguments, we then prove that $T_1$ is not a conditional expectation of the form 
$T_1(X)=E(XY|\cF_1)$, $X\in L^\infty(\R,\cF)$, for some $Y\in L^\infty(\R,\cF)$. Indeed, otherwise, we get the equality $1=0$ on the non null set $I$.

\section{Appendix}
\begin{lemm}\label{Q}
For $r_H \in \mathbb{L}^{1}(\Q, \mathcal{H})$. 
     There exist $p_H, q_H  \in \mathbb{L}^{1}(\mathbb{Z}, \mathcal{H}) \times \mathbb{L}^{1}(\N^*, \mathcal{H})$ such that $r_H= p_H/ q_H$.
\end{lemm}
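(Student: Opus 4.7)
The plan is to exploit the fact that $\mathbb{Q}$ is countable, which reduces the problem to a simple measurable selection via an enumeration and a partition of $\Omega$ into countably many $\mathcal{H}$-measurable pieces.

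First I would fix, once and for all, an enumeration $\mathbb{Q}=\{\alpha_n:n\in\mathbb{N}\}$ together with a deterministic choice function: for each $n$, pick $(a_n,b_n)\in\mathbb{Z}\times\mathbb{N}^{*}$ with $\alpha_n=a_n/b_n$ (for instance the reduced representation, taking $a_n=0, b_n=1$ when $\alpha_n=0$). This is purely a choice on the countable set $\mathbb{Q}$, so no measurability issue arises at this stage.

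Next I would exploit the $\mathcal{H}$-measurability of $r_H$. Since $r_H$ takes values in the countable set $\mathbb{Q}$, the family $\{H_n\}_{n\in\mathbb{N}}$ defined by $H_n:=\{r_H=\alpha_n\}$ lies in $\mathcal{H}$ and forms a (countable) partition of $\Omega$. I would then set
\[
p_H:=\sum_{n\in\mathbb{N}} a_n\,1_{H_n},\qquad q_H:=\sum_{n\in\mathbb{N}} b_n\,1_{H_n}.
\]
Both series are unambiguously defined pointwise because $(H_n)_n$ is a partition: at each $\omega$, exactly one indicator is nonzero. By construction $p_H$ is $\mathcal{H}$-measurable with values in $\mathbb{Z}$, and $q_H$ is $\mathcal{H}$-measurable with values in $\mathbb{N}^{*}$ (in particular $q_H\geq 1$ so the ratio is always well defined).

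Finally I would verify the identity $r_H=p_H/q_H$: on each atom $H_n$ one has $r_H=\alpha_n$ and $p_H/q_H=a_n/b_n=\alpha_n$, so the equality holds on $\bigcup_n H_n=\Omega$. The only potential subtlety — and hence the ``main obstacle,'' though it is essentially cosmetic here — is matching integrability classes: the statement is written with $\mathbb{L}^{1}$ but the construction above produces elements of $\mathbb{L}^{0}$. This is harmless for the application in the proof of Theorem \ref{linear}, where only $\mathcal{H}$-measurability of $p_H,q_H$ is used; if $\mathbb{L}^{1}$ is really needed, one can either truncate by multiplying $p_H$ and $q_H$ by a common positive integer-valued bounded $\mathcal{H}$-measurable rescaling (which does not change the ratio) or read the statement as $\mathbb{L}^{0}$, which is how it is used.
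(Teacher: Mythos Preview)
Your argument is correct, and it takes a genuinely different route from the paper. The paper defines the random set $\Gamma(\omega)=\{(p,q)\in\mathbb{Z}\times\mathbb{N}^{*}: r_H(\omega)\,q=p\}$, checks that its graph is $\mathcal{H}\otimes\mathcal{B}(\mathbb{Z})\otimes\mathcal{B}(\mathbb{N}^{*})$-measurable and a.s.\ nonempty, and then invokes a measurable selection theorem to produce $(p_H,q_H)$. Your approach instead exploits the countability of $\mathbb{Q}$ directly: enumerate $\mathbb{Q}$, partition $\Omega$ into the level sets $H_n=\{r_H=\alpha_n\}\in\mathcal{H}$, and assemble $p_H,q_H$ as countable sums of indicators. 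Your method is more elementary and self-contained, avoiding any appeal to measurable selection machinery; the paper's method is more in the spirit of a general template (the same device is reused for Lemma~\ref{R}) and would generalise to situations where the target set is not countable. Your remark about $\mathbb{L}^{1}$ versus $\mathbb{L}^{0}$ is well taken: in the proof of Theorem~\ref{linear} the lemma is applied to $r_H\in\mathbb{L}^{0}(\mathbb{Q},\mathcal{H})$, so the $\mathbb{L}^{1}$ in the statement is a typo and only measurability is actually needed.
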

\begin{proof}
Consider the random set
$\Gamma( \omega) = \{(p,q) \in \mathbb{Z} \times \N^*: r_H q= p\}$. We observe that 
  its graph $Graph \Gamma=\{(\omega,p,q):~(p,q)\in \Gamma(\omega)\}$ is a measurable set of $\mathcal{H} \times \cB(\mathbb{Z}) \times \cB(\N^*)$, $\sigma$-algebra product of $\cH$ and the Borel  $\sigma$-algebras of $\mathbb{Z}$ and $\N^*$ respectively and $\Gamma( \omega)$ is non empty. Indeed $f :\omega \mapsto r_H(\omega) q- p$ is measurable. Therefore, we conclude by a measurable selection argument, see \cite{KS}[Section A.4], that there exists a measurable selector $(p_H, q_H)$ of $\Gamma$.
\end{proof}

\begin{lemm}\label{R}
 For any  $\alpha \in \mathbb{L}^{1}(\R, \mathcal{H})$, there exist $(r_n)_n , (q_n)_n \in \mathbb{L}^{1}(\Q, \mathcal{H})$ such that $$\alpha-1/n \le r_n \le \alpha \le q_n \le \alpha+1/n . $$
\end{lemm}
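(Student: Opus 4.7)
The plan is to give an explicit measurable construction via the floor and ceiling functions, which sidesteps any appeal to the measurable selection theorem used in Lemma \ref{Q}. Since $\alpha$ is $\mathcal{H}$-measurable with values in $\R$, for each integer $n \ge 1$ I would set
$$r_n := \frac{\lfloor n\alpha \rfloor}{n}, \qquad q_n := \frac{\lceil n\alpha \rceil}{n},$$
where $\lfloor \cdot \rfloor$ and $\lceil \cdot \rceil$ denote the pointwise floor and ceiling. Both $\lfloor \cdot \rfloor$ and $\lceil \cdot \rceil$ are Borel functions $\R \to \mathbb{Z}$, so $r_n$ and $q_n$ are $\mathcal{H}$-measurable. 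Moreover, they are rational-valued (with denominator dividing $n$), hence $r_n, q_n \in \eL(\Q, \mathcal{H})$, and they are integrable since $|r_n|, |q_n| \le |\alpha| + 1$.

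The required inequalities follow immediately from the defining property of the floor function. Applied pointwise at $x = n\alpha(\omega) \in \R$, one has $\lfloor x \rfloor \le x < \lfloor x \rfloor + 1$, which after dividing by $n$ yields $r_n \le \alpha < r_n + 1/n$, hence $\alpha - 1/n < r_n \le \alpha$. Symmetrically, $\lceil x \rceil - 1 < x \le \lceil x \rceil$ gives $\alpha \le q_n < \alpha + 1/n$. Combining these two chains produces the desired inequality
$$\alpha - 1/n \le r_n \le \alpha \le q_n \le \alpha + 1/n.$$

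There is no real obstacle here: the only thing one has to be careful about is that $\alpha$ is finite a.s. (so that floor and ceiling are well defined), which is guaranteed by $\alpha \in \eL(\R, \mathcal{H})$. Should one prefer to avoid this explicit construction, the alternative is to argue exactly as in Lemma \ref{Q} by applying a measurable selection theorem to the random set $\Gamma_n(\omega) = \Q \cap [\alpha(\omega) - 1/n, \alpha(\omega)]$, whose graph is in $\mathcal{H} \otimes \mathcal{B}(\Q)$ and which is non-empty by density of $\Q$ in $\R$; the same reasoning gives the upper rational approximation $q_n$.
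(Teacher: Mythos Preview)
Your proof is correct and takes a genuinely different route from the paper. The paper argues, in the spirit of Lemma~\ref{Q}, by applying a measurable selection theorem to the random set
\[
\Gamma_n(\omega)=\bigl\{(r,q)\in\Q^2:\ \alpha(\omega)-1/n\le r\le\alpha(\omega)\le q\le\alpha(\omega)+1/n\bigr\},
\]
whose graph lies in $\mathcal{H}\otimes\mathcal{B}(\Q^2)$ and which is a.s.\ non-empty by density of $\Q$ in $\R$. You instead give an explicit Borel recipe, $r_n=\lfloor n\alpha\rfloor/n$ and $q_n=\lceil n\alpha\rceil/n$, which makes the measurability, rationality, and the chain of inequalities immediate, and handles the $\mathbb{L}^1$ requirement directly via the bound $|r_n|,|q_n|\le|\alpha|+1$. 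Your approach is more elementary and self-contained (no selection machinery needed), and in fact yields slightly sharper strict inequalities; the paper's approach has the advantage of being a uniform template that mirrors the proof of Lemma~\ref{Q}, which is exactly the alternative you sketch at the end.
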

\begin{proof}
Consider $\Gamma( \omega) = \{(r_n,q_n) \in \mathbb{Q}^2 : \alpha-1/n \le r_n \le \alpha \le q_n \le \alpha+1/n \}$. It is non empty a.s. and its graph is a measurable set of $\mathcal{H} \times \cB(\mathbb{Z}) \times \cB(\N^*)$. We then conclude by a measurable argument, see \cite{KS}[Section A.4].
\end{proof}

\begin{lemm}\label{EssupProj}
Consider $F_t \in \cF_t$ such that $ \mathbb{P}(F_t) >0$ and $X \in \mathbb{L}^{0}(\R, \cF_T)$ such that $X=X 1_{F_t}$ and $\esssup_{\cF_0}(X)\ne 0$.
If, for some $\e\in \R_+$, we have  \bea \label{EssupProj-1} \esssup_{\cF_0}(X- \e  1_{F_t})=\esssup_{\cF_0}(X), \eea
  then $\e=0$. 

\end{lemm}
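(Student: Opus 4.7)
The plan is to assume $\e>0$ and derive a contradiction with $\esssup_{\cF_0}(X)\ne 0$. Set $M:=\esssup_{\cF_0}(X)$, which by hypothesis also equals $\esssup_{\cF_0}(X-\e 1_{F_t})$. I would show separately that $M\le 0$ a.s.\ and $M\ge 0$ a.s., which forces $M=0$ a.s.\ and contradicts the hypothesis.

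For the upper bound, fix any $\delta\in(0,\e]$ and consider the $\cF_0$-measurable candidate $M_\delta:=M-\delta\,1_{\{M\ge \delta\}}$. A case analysis on the partition of $\Omega$ into $\{M\ge\delta\}\cap F_t$, $\{M\ge\delta\}\cap F_t^c$, $\{M<\delta\}\cap F_t$ and $\{M<\delta\}\cap F_t^c$ verifies that $M_\delta\ge X-\e 1_{F_t}$ a.s. The four cases invoke respectively: $X\le M$ together with $\delta\le \e$; $X=0$ on $F_t^c$ together with $M-\delta\ge 0$; $X\le M+\e$ on $F_t$ (obtained from $X-\e 1_{F_t}\le M$); and the fact that $M\ge X=0$ a.s.\ on $F_t^c$. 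By the defining minimality of $M=\esssup_{\cF_0}(X-\e 1_{F_t})$ one then gets $M_\delta\ge M$, hence $\delta\,1_{\{M\ge\delta\}}=0$ a.s. Taking $\delta=\e\wedge 1/n$ and letting $n\to\infty$ yields $M\le 0$ a.s.

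For the lower bound, introduce $D:=\{\mathbb{P}(F_t\mid\cF_0)=1\}\in\cF_0$. Since $\mathbb{E}[1_D\,1_{F_t^c}]=\mathbb{E}[1_D\,\mathbb{P}(F_t^c\mid\cF_0)]=0$, one has $1_{F_t}1_D=1_D$ a.s. Using the regularity of $\esssup_{\cF_0}$ and its $\cF_0$-translation invariance,
\begin{equation*}
M\,1_D=\esssup_{\cF_0}\bigl((X-\e 1_{F_t})\,1_D\bigr)=\esssup_{\cF_0}\bigl((X-\e)\,1_D\bigr)=M\,1_D-\e\,1_D,
\end{equation*}
which forces $\e\,1_D=0$ a.s.\ and hence $\mathbb{P}(D)=0$. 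On the other hand, $M\ge X=0$ a.s.\ on $F_t^c$ gives $1_{\{M<0\}}\,\mathbb{P}(F_t^c\mid\cF_0)=0$ after conditioning, so $\{M<0\}\subseteq D$ up to a null set, and therefore $M\ge 0$ a.s.

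Combining the two bounds gives $M=0$ a.s., contradicting $\esssup_{\cF_0}(X)\ne 0$; hence $\e=0$. The main obstacle is the upper bound: one must engineer $M_\delta$ so that all four cases close simultaneously, and the first case only works because of the choice $\delta\le\e$, since the bound available on $F_t$ is merely $X\le M+\e$ rather than the stronger $X\le M+\e-\delta$ one might naively need.
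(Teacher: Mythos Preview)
Your argument is correct. The upper bound via the $\cF_0$-measurable competitor $M_\delta=M-\delta 1_{\{M\ge\delta\}}$ and the lower bound via the set $D=\{\mathbb{P}(F_t\mid\cF_0)=1\}$ both close, and together they force $M=0$, contradicting the hypothesis. One small comment: in Case~3 and in your closing remark you invoke only the weaker bound $X\le M+\e$ obtained from the hypothesis, but in fact $X\le M$ is already available from $M=\esssup_{\cF_0}(X)$, so Case~3 is immediate and the constraint $\delta\le\e$ is needed solely for Case~1.

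The paper's route is quite different. It splits on whether $\mathbb{P}(F_t^c)=0$ or not; in the nontrivial case it rewrites the hypothesis as $\esssup_{\cF_0}(X+\e 1_{F_t^c})=M+\e$, then argues directly that $\esssup_{\cF_0}(X+\e 1_{F_t^c})=M\vee \e$, whence $M\vee\e=M+\e$, forcing either $\e=0$ or $M=0$. This computation tacitly treats $M$ as a constant: the step ``$M\ge 0$ on $F_t^c$ hence $M\ge 0$'' and the step ``$\alpha_0\ge\e$ on $F_t^c$ hence $\alpha_0\ge\e$'' only follow when $\cF_0$ is trivial. Your approach, by contrast, works for an arbitrary sub-$\sigma$-algebra $\cF_0$ because you localize on the $\cF_0$-sets $\{M\ge\delta\}$ and $D$ rather than on the $\cF_t$-set $F_t^c$. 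So your proof is not only different but in fact more general than the paper's, at the modest cost of a slightly longer case analysis.
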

\begin{proof}
A first  case is when $ 1_{F_t}=1$. In that case $\esssup_{\cF_0}(X)- \e =\esssup_{\cF_0}(X)$, thus $\e=0$. 
Suppose that $ 1_{F_t} < 1$, i.e. $ \mathbb{P}(\Omega \setminus F_t) >0$.  As $\esssup_{\cF_0}( X)\ge X$ a.s., we deduce that 
$\esssup_{\cF_0}( X)\ge 0$ on $\Omega \setminus F_t\ne \empty$ hence $\esssup_{\cF_0}( X)\ge 0$. Note that (\ref{EssupProj-1}) is equivalent to     $\alpha_0:=\esssup_{\cF_0}(X+ \e  1_{\Omega \setminus F_t})=\esssup_{\cF_0}(X)+ \e $. We observe that $\alpha_0\ge \esssup_{\cF_0}( X) \vee \e$ and, also,  $\esssup_{\cF_0}( X) \vee \e \ge X + \e  1_{\Omega \setminus F_t}$ on $\Omega \setminus F_t$. \smallskip

On the other hand, on the set $F_t$, we also have
 $$ \alpha_0\ge \esssup_{\cF_0}( X) \vee \e \ge X = X + \e  1_{F_t^c}.$$ 
 Therefore, a.s. we have $\alpha_0\ge \esssup_{\cF_0}( X) \vee \e \ge \esssup_{\cF_0}(X+ \e  1_{\Omega \setminus F_t}). $
 We deduce that $\esssup_{\cF_0}( X) \vee \e =\alpha_0.$ This implies that
(\ref{EssupProj-1}) is equivalent to $$\esssup_{\cF_0}( X) \vee \e =\esssup_{\cF_0}( X) + \e.$$ 
If $\esssup_{\cF_0}( X) \ge  \e$, then $\esssup_{\cF_0}( X) =\esssup_{\cF_0}( X) + \e$ and thus $\e =0$. If $\esssup_{\cF_0}( X) < \e$, we get that $\esssup_{\cF_0}( X)=0$ in contradiction with the assumption.
\end{proof}
\begin{coro} \label{coro}
Consider $F_t \in \cF_t$ such that $ \mathbb{P}(F_t) >0$ and $X \in \mathbb{L}^{0}(\R, \cF_T)$ such that $\esssup_{\cF_0}(X 1_{F_t}) \ne 0$.
If, for some $\e \ge 0$,  $$\esssup_{\cF_0}((X- \e)  1_{F_t})=\esssup_{\cF_0}(X 1_{F_t}),$$
  then $\e=0$. 

\end{coro}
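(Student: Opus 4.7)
The plan is to deduce the corollary directly from Lemma \ref{EssupProj} by a simple change of variable that reduces the case of an arbitrary $X$ to the case where $X$ is already supported on $F_t$.

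First, I would set $\tilde X := X 1_{F_t}$. Then $\tilde X \in \mathbb{L}^0(\R, \cF_T)$ and by construction $\tilde X = \tilde X 1_{F_t}$, so $\tilde X$ automatically satisfies the ``supported on $F_t$'' hypothesis of Lemma \ref{EssupProj}. Moreover $\esssup_{\cF_0}(\tilde X) = \esssup_{\cF_0}(X 1_{F_t}) \neq 0$ by the assumption of the corollary, so the non-triviality hypothesis of Lemma \ref{EssupProj} also holds for $\tilde X$.

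Next I would rewrite the hypothesis of the corollary in terms of $\tilde X$. Since
$$(X - \e) 1_{F_t} = X 1_{F_t} - \e 1_{F_t} = \tilde X - \e 1_{F_t},$$
the assumed equality $\esssup_{\cF_0}((X-\e) 1_{F_t}) = \esssup_{\cF_0}(X 1_{F_t})$ becomes
$$\esssup_{\cF_0}(\tilde X - \e 1_{F_t}) = \esssup_{\cF_0}(\tilde X),$$
which is exactly the hypothesis of Lemma \ref{EssupProj} applied to $\tilde X$. Applying that lemma yields $\e = 0$, which is the desired conclusion.

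There is no real obstacle here: the corollary is essentially a packaging of Lemma \ref{EssupProj} where the assumption ``$X$ is supported on $F_t$'' is dropped and replaced by working with $X 1_{F_t}$ instead of $X$. The only thing to verify is that the two hypotheses (the one stated in the corollary and the one needed in the lemma) indeed coincide under the substitution $\tilde X = X 1_{F_t}$, which is the simple algebraic identity above.
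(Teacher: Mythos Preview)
Your proposal is correct and is exactly the intended argument: the paper states the corollary without proof immediately after Lemma~\ref{EssupProj}, and the substitution $\tilde X := X 1_{F_t}$ is precisely how one reduces to that lemma.
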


All authors contributed to the study conception and design. Material preparation, data collection and analysis were performed by Dorsaf Cherif  and Emmanuel Lépinette. The first draft of the manuscript was written by Dorsaf Cherif  and Emmanuel Lépinette. All authors read and approved the final manuscript.

\end{document}